\newtheorem{theorem}{Theorem}[section]
\newtheorem{corollary}[theorem]{Corollary}
\newtheorem{proposition}[theorem]{Proposition}
\theoremstyle{definition}
\newtheorem{definition}[theorem]{Definition}
\newtheorem{remark}[theorem]{Remark}
\newtheorem{example}[theorem]{Example}
\theoremstyle{remark}
\renewcommand{\theclaim}{\textup{\theclaim}}
\numberwithin{equation}{section}
\def\openone
\newbox\ipbox
\newcommand{\ip}[2]{\left\langle #1\, , \,#2\right\rangle}
\newcommand{\diracb}[1]{\left\langle #1\mathrel{\mathchoice

{\setbox\ipbox=\hbox{$\displaystyle \left\langle\mathstrut
#1\right.$}

\vrule height\ht\ipbox width0.25pt depth\dp\ipbox}

{\setbox\ipbox=\hbox{$\textstyle \left\langle\mathstrut
#1\right.$}

\vrule height\ht\ipbox width0.25pt depth\dp\ipbox}

{\setbox\ipbox=\hbox{$\scriptstyle \left\langle\mathstrut
#1\right.$}

\vrule height\ht\ipbox width0.25pt depth\dp\ipbox}

{\setbox\ipbox=\hbox{$\scriptscriptstyle \left\langle\mathstrut
#1\right.$}

\vrule height\ht\ipbox width0.25pt depth\dp\ipbox}

}\right. }
\newcommand{\dirack}[1]{\left. \mathrel{\mathchoice

{\setbox\ipbox=\hbox{$\displaystyle \left.\mathstrut
#1\right\rangle$}

\vrule height\ht\ipbox width0.25pt depth\dp\ipbox}

{\setbox\ipbox=\hbox{$\textstyle \left.\mathstrut
#1\right\rangle$}

\vrule height\ht\ipbox width0.25pt depth\dp\ipbox}

{\setbox\ipbox=\hbox{$\scriptstyle \left.\mathstrut
#1\right\rangle$}

\vrule height\ht\ipbox width0.25pt depth\dp\ipbox}

{\setbox\ipbox=\hbox{$\scriptscriptstyle \left.\mathstrut
#1\right\rangle$}

\vrule height\ht\ipbox width0.25pt depth\dp\ipbox}

} #1\right\rangle}
\newcommand{\beq}{\begin{equation}}
\newcommand{\eeq}{\end{equation}}
\newcommand{\cj}[1]{\overline{#1}}
\newcommand{\bz}{\mathbb{Z}}
\newcommand{\B}{\mathcal{B}}
\newcommand{\br}{\mathbb{R}}
\newcommand{\bc}{\mathbb{C}}
\newcommand{\bt}{\mathbb{T}}
\def\blfootnote{\xdef\@thefnmark{}\@footnotetext}
\newcommand{\gm}{\gamma}
\newcommand{\abs}[1]{\lvert#1\rvert}
\newcommand{\norm}[1]{\lvert \lvert#1\rvert \lvert }
\newcommand{\Span}{\overline{\operatorname*{span}}}
\def\R{\mathbb{R}}
\def\N{\mathbb{N}}
\def\H{\mathcal{H}}
\def\-{^{-1}}
\def\B{\mathcal{B}}
\def\C{\mathbb{C}}
\def\Z{\mathbb{Z}}
\begin{document}

\title[Orthonormal bases generated by Cuntz algebras]{Orthonormal bases generated by Cuntz algebras}
\author{Dorin Ervin Dutkay}

\address{[Dorin Ervin Dutkay] University of Central Florida\\
	Department of Mathematics\\
	4000 Central Florida Blvd.\\
	P.O. Box 161364\\
	Orlando, FL 32816-1364\\
U.S.A.\\} \email{Dorin.Dutkay@ucf.edu}
\author{Gabriel Picioroaga}
\address{[Gabriel Picioroaga] University of South Dakota\\
          Department of Mathematical Sciences\\
          414 E. Clark St. \\
          Vermillion, SD 57069\\
U.S.A. \\} \email{Gabriel.Picioroaga@usd.edu}
\author{Myung-Sin Song}
\address{[Myung-Sin Song] Department of Mathematics and Statistics\\
	Southern Illinois University Edwardsville\\
	Campus Box 1653, Science Building\\
	Edwardsville, IL 62026\\
U.S.A. \\} \email{msong@siue.edu}

\thanks{} 
\subjclass[2000]{42C10,28A80,42C40}
\keywords{Cuntz algebras, fractal, Fourier basis, Hadamard matrix, quadrature mirror filter}

\begin{abstract}
 We show how some orthonormal bases can be generated by representations of the Cuntz algebra; these include Fourier bases on fractal measures, generalized Walsh bases on the unit interval and piecewise exponential bases on the middle third Cantor set. 
\end{abstract}
\maketitle \tableofcontents

\section{Introduction}
The Cuntz algebra $\mathcal O_N$, \cite{Cun77} is the $C^*$-algebra generated by $N$ isometries $S_i$, $i=0,\dots,N-1$ with the properties:
\begin{equation}
S_i^*S_j=\delta_{ij},\, i,j=0,\dots,N-1,\quad \sum_{i=0}^{N-1}S_iS_i^*=I.
\label{eqcu1}
\end{equation}
The Cuntz algebras are ubiquitous in analysis, but we draw our inspiration from wavelet theory. The role played by the Cuntz algebras in wavelet theory was described in the work of Bratteli and Jorgensen \cite{BrJo02a,BrJo02b,BrJo00,BrJo97}. Orthonormal wavelet bases are constructed from various choices of quadrature mirror filters (QMF) (see \cite{Dau92}). These filters are in one-to-one correspondence with certain representations of the Cuntz algebra. In section 2, we will show how the ideas of Bratteli and Jorgensen carry over without too much difficulty in a more general setting associated to some non-linear dynamics. We describe here this setting and give some examples.

\begin{definition}\label{def0.1}
Let $X$ be a compact metric space and $\mu$ a Borel probability measure on $X$. Let $r: X\rightarrow X$ an $N$-to-$1$ onto Borel measurable map, i.e.  $|r^{-1}(z)|=N$ for $\mu$.a.e. $z\in X$, where $|\cdot|$ indicates cardinality. We say that $\mu$ is strongly invariant (for $r$) if for every continuous function $f$ on $X$ the following invariance equation is satisfied:

\beq\label{e1}
\int f d\mu=\frac{1}{N}\int \sum_{r(w)=z}f(w)d\mu (z)
\eeq

\end{definition}
\medskip

{\bf Assumption.} In this paper $\mu$ will be a strongly invariant measure for the $N$-to-1 map $r:X\rightarrow X$ as in Definition \ref{def0.1}

\begin{example}\label{ex1}
Let $\bt=\{z\in\bc\mbox{ : }|z|=1\}$ be the unit circle. Let $r(z)=z^N$, $z\in \bt$. Let $\mu$ be the Haar measure on $\bt$. Then $\mu$ is strongly invariant. An equivalent system can be realized on$[0,1]$ with $r(x)=Nx\mbox{ mod }1$, $x\in [0,1]$ with the Lebesgue measure $dx$ on $[0,1]$. We can identify the unit circle $\bt$ with the unit interval $[0,1]$ by $z=e^{2\pi ix}$. 
\end{example}


\begin{example}\label{exx1}
Let $\Gamma$ be a countable discrete abelian group. Let $\alpha:\Gamma\rightarrow\Gamma$ be an endomorphism of $\Gamma$ such that $\alpha(\Gamma)$ has finite index $N$ in $\Gamma$ and 
\beq\label{x1}
\quad \cap_{n\geq 0}\alpha^n(\Gamma)=\{0\}
\eeq
Let $\hat{\Gamma}$ be the compact dual group and let $\mu$ be the Haar measure on $\hat{\Gamma}$, $\mu(\hat{\Gamma})=1$. Denote by $\alpha^*$ the dual endomorphism on $\hat{\Gamma}$, $w\mapsto w\circ\alpha$ 
$(w\in\hat{\Gamma})$. Observe that $\alpha^*$ is surjective, $|\mbox{ Ker }\alpha^*|=N$ so $|{\alpha^*}^{-1}(z)|=N$ for all $z\in\hat{\Gamma}$, and condition (\ref{x1}) implies that 
$\cup_{n\geq 0}\mbox{Ker }\alpha^{*n}$ is dense in $\hat{\Gamma}$.

\end{example}

\begin{proposition}
The Haar measure on $\hat{\Gamma}$ is strongly invariant for $\alpha^*$.
\end{proposition}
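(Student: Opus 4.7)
My plan is to exploit two properties of Haar measure $\mu$ on $\hat{\Gamma}$: translation invariance, and invariance under pushforward by continuous surjective endomorphisms of the compact group. Once I establish $\alpha^*_{*}\mu = \mu$, the strong invariance identity \eqref{e1} drops out of a short change-of-variables calculation.

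First I would show $\alpha^*_{*}\mu = \mu$. The pushforward $\alpha^*_{*}\mu$ is a Borel probability measure on $\hat{\Gamma}$; to check that it is translation invariant, fix $h \in \hat{\Gamma}$ and, using surjectivity of $\alpha^*$, choose $u_h$ with $\alpha^*(u_h) = h$. Then for $g \in C(\hat{\Gamma})$,
\[\int g(zh) \, d(\alpha^*_{*}\mu)(z) = \int g(\alpha^*(w)\alpha^*(u_h)) \, d\mu(w) = \int g(\alpha^*(wu_h)) \, d\mu(w) = \int g\circ \alpha^* \, d\mu,\]
the last equality by translation invariance of $\mu$. So $\alpha^*_{*}\mu$ is translation invariant, and by uniqueness of Haar measure on compact abelian groups, $\alpha^*_{*}\mu = \mu$.

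Next I would rewrite the inner sum. For any $z \in \hat{\Gamma}$ and any $u \in (\alpha^*)^{-1}(z)$, one has $(\alpha^*)^{-1}(z) = u\cdot \mbox{Ker }\alpha^*$, so setting $F(z) := \sum_{\alpha^*(w)=z} f(w)$, we get
\[F(\alpha^*(u)) = \sum_{\eta \in \mbox{Ker }\alpha^*} f(u\eta) \qquad (u \in \hat{\Gamma}),\]
a sum independent of the representative $u$. As a function of $u$ this expression is continuous and invariant under $\mbox{Ker }\alpha^*$, hence descends to a continuous function $F$ on $\hat{\Gamma}$.

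Combining the two steps and using translation invariance once more,
\[\int F \, d\mu \;=\; \int F \circ \alpha^* \, d\mu \;=\; \sum_{\eta \in \mbox{Ker }\alpha^*} \int f(u\eta) \, d\mu(u) \;=\; N \int f \, d\mu,\]
the last equality using $|\mbox{Ker }\alpha^*| = N$. Dividing by $N$ gives \eqref{e1}. I do not foresee a real obstacle: the step requiring the most care is the verification $\alpha^*_{*}\mu = \mu$, which is the standard uniqueness-of-Haar argument, and everything else is a clean change of variables. An alternative route would be to verify \eqref{e1} on characters $\gamma \in \Gamma$ (viewed as functions on $\hat{\Gamma}$) and extend by Stone--Weierstrass, using $\sum_{\eta \in \mbox{Ker }\alpha^*}\eta(\gamma) = N\cdot \mathbf{1}_{\gamma \in \alpha(\Gamma)}$, but the pushforward approach is more conceptual.
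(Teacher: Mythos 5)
Your proof is correct, and it takes a genuinely different route from the paper's. The paper verifies the strong invariance identity only on the characters $e_\gamma$, $\gamma\in\Gamma$ (which suffices by density of trigonometric polynomials): for $\gamma\neq 0$ it picks $g_0$ with $e_\gamma(g_0)\neq 1$ and shows, by a translation in $z$ followed by the substitution $w\mapsto u-g_0$ in the fiber, that the quantity $\int \frac{1}{N}\sum_{\alpha^*(w)=z}e_\gamma(w)\,d\mu(z)$ equals $\cj{e_\gamma(g_0)}$ times itself, hence vanishes, matching $\int e_\gamma\,d\mu=0$. You instead establish the pushforward identity $\alpha^*_*\mu=\mu$ via uniqueness of Haar measure and identify the fiber sum $\sum_{\alpha^*(w)=z}f(w)$ with the sum over the coset $u\cdot\mbox{Ker}\,\alpha^*$, after which translation invariance handles an arbitrary continuous $f$ in one stroke --- essentially the alternative you flag at the end, turned around. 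Your argument is more conceptual and works verbatim for any continuous surjective endomorphism with finite kernel of a compact metrizable group, abelian or not, while the paper's stays entirely inside the Pontryagin-duality setup of Example \ref{exx1} and is quicker to write down. Two small points are worth making explicit in your version: $\hat\Gamma$ is metrizable because $\Gamma$ is countable, so every Borel probability measure on it is regular and the uniqueness of Haar measure applies to $\alpha^*_*\mu$; and $\alpha^*$ is a continuous surjection of compact Hausdorff spaces, hence a quotient map, which is what justifies the continuity of the descended function $F$ (though measurability would already suffice for the integration).
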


\begin{proof}
To prove the strong invariance relation (\ref{e1}) it is enough to check it on characters on $\hat{\Gamma}$, which by Pontryagin duality are given by the elements of $\Gamma$ and we denote them 
by $e_{\gm}(w)=w(\gm)$, $\gm\in\Gamma$, $w\in\hat{\Gamma}$. Fix $\gm\in\Gamma$, $\gm\neq 0$. Pick an element $g_0\in\hat{\Gamma}$ such that $e_{\gm}(g_0)\neq 1$. We have

$$\int_{\hat{\Gamma}}\frac{1}{N}\sum_{\alpha^*(w)=z}e_{\gm}(w) d\mu(z)=  \int_{\hat{\Gamma}}\frac{1}{N}\sum_{\alpha^*(w)=z-\alpha^*(g_0)}e_{\gm}(w) d\mu(z)= \int_{\hat{\Gamma}}\frac{1}{N}\sum_{\alpha^*(u)=z}e_{\gm}(u-g_0) d\mu(z)$$

$$= \cj{e_{\gm}(g_0)} \int_{\hat{\Gamma}}\frac{1}{N}\sum_{\alpha^*(u)=z}e_{\gm}(u) d\mu(z).$$

Since $e_{\gm}(g_0)\neq 1$ it follows that $\int_{\hat{\Gamma}}\frac{1}{N}\sum_{\alpha^*(w)=z}e_{\gm}(w) d\mu(z)=0$. Since $\int_{\hat{\Gamma}}e_{\gm}(z) d\mu(z)=0$ the strong invariance of $\mu$ is obtained. 

\end{proof}


\begin{example}\label{ex2}
We consider affine iterated function systems with no overlap. Let $R$ be a $d\times d$ expansive real matrix, i.e., all the eigenvalues of $R$ have absolute value strictly greater than 1.Let $B\subset \br^d$ a finite set such that $N=|B|$. Define the affine iterated function system 

\beq\label{e2}
\tau_b(x)=R^{-1}(x+b)\quad(x\in\br^d\mbox{, }b\in B)
\eeq
By \cite{Hut81} there exists a unique compact subset $X_B$ of $\br^d$ which satisfies the invariance equation

\beq\label{e3}
X_B=\cup_{b\in B}\tau_b(X_B)
\eeq

$X_B$ is called the attractor of the iterated function system $(\tau_b)_{b\in B}$. Moreover $X_B$ is given by 

\beq\label{e4}
X_B=\left\{ \sum_{k=1}^{\infty}R^{-k}b_k \mbox{ : }b_k\in B \mbox{ for all }k\geq 1\right\}
\eeq
Also, from \cite{Hut81}, there is a unique probability measure $\mu_B$ on $\br^d$ satisfying the invariance equation

\beq\label{e5}
\int f d\mu_B=\frac{1}{N}\sum_{b\in B}\int f\circ\tau_b d\mu_B
\eeq
for all continuous compactly supported functions $f$ on $\br$. 
We call $\mu_B$ the invariant measure for the IFS $(\tau_b)_{b\in B}$. 
By \cite{Hut81}, $\mu_B$ is supported on the attractor $X_B$.
We say that the IFS has no overlap if 
$\mu_B(\tau_b(X_B)\cap\tau_b'(X_B))=\emptyset\mbox{ for all }b\neq b'\mbox{ in }B$.
\par Assume that the IFS $(\tau_b)_{b\in B}$ has no overlap. Define the map $r:X_B\rightarrow X_B$ 

\beq\label{e6}
r(x)=\tau_b^{-1}(x), \mbox{   if }x\in \tau_b(X_B)
\eeq

\noindent Then $r$ is an $N$-to-$1$ onto map and $\mu_B$ is strongly invariant for $r$.
 Note that $r^{-1}(x)=\{\tau_b(x)\mbox{ : }b\in B \}$ for $\mu_B$.a.e. $x\in X_B$.
\end{example} 

\begin{example}\label{ex3}
Let $r$ be a rational map on the complex sphere $\bc_{\infty}$. Let $J$ be its Julia set. Then by \cite{Bro65}, \cite{ObPi72} there exists a stongly invariant measure $\mu$ supported on $J$, which is non-atomic. The Julia set is invariant for $r$ and the restriction $r:J\rightarrow J$ is a $N$-to-$1$ onto map where $N=deg(r)$. 
\end{example}

We will show in Section 2 Proposition \ref{pr11} how representations of the Cuntz algebra are obtained from a choice of a quadrature mirror filter (QMF) basis (Definition \ref{def1.8}. Then we show how QMF bases can be constructed using some unitary matrix valued functions (Theorem \ref{th2.9}). This gives us a large variety of representations of the Cuntz algebras, which we use in Section 3 to construct various orthonormal bases. 

The central result of the paper is Theorem \ref{diez}, where we present a general criterion for a Cuntz algebra representation to generate an orthonormal basis. As a corollary (Theorem \ref{th3.4}), when applied to some affine iterated function systems, we obtain a construction of piecewise exponential bases on some Cantor fractal measures which extends a result of Dutkay and Jorgensen \cite{DJ06}. In particular, we obtain piecewise exponential orthonormal bases on the middle third Cantor set (Example \ref{ex3.7}) which is known \cite{JoPe98} not to have any orthonormal bases of exponential functions. 

Another corollary to our Theorem \ref{diez} gives us a construction of generalized Walsh bases on the unit interval starting from any unitary $N\times N$ matrix with constant first row.

\section{QMF bases and representations of the Cuntz algebra}

\begin{definition}\label{def1.5}
A {\it quadrature mirror filter (QMF)} for $r$ is a function $m_0$ in $L^\infty(X,\mu)$ with the property that 
\begin{equation}
\frac{1}{N}\sum_{r(w)=z}|m_0(w)|^2=1,\quad(z\in X)
\label{eq1.5.1}
\end{equation}
\end{definition}

As shown by Dutkay and Jorgensen \cite{DuJo05,DuJo07}, every QMF gives rise to a wavelet theory. Various extra conditions on the filter $m_0$ will produce wavelets in $L^2(\br)$ \cite{Dau92}, on Cantor sets \cite{DuJo06w,MaPa11}, on Sierpinski gaskets \cite{Dan08} and many others.
\begin{theorem}\cite{DuJo05,DuJo07}\label{th1.6}
Let $m_0$ be a QMF for $r$. Then there exists a Hilbert space $\H$, a representation $\pi$ of $L^\infty(X)$ on $\H$, a unitary operator $U$ on $\H$ and a vector $\varphi$ in $\H$ such that 
\begin{enumerate}
	\item {\bf (Covariance)} 
	\begin{equation}
U\pi(f)U^*=\pi(f\circ r),\quad(f\in L^\infty(X))
\label{eqw1}
\end{equation}
\item {\bf (Scaling equation)} 
\begin{equation}
U\varphi=\pi(m_0)\varphi
\label{eqw2}
\end{equation}
\item {\bf (Orthogonality)}
\begin{equation}
\ip{\pi(f)\varphi}{\varphi}=\int f\,d\mu,\quad(f\in L^\infty(X))
\label{eqw3}
\end{equation}
\item {\bf (Density)}
\begin{equation}
\Span\left\{U^{-n}\pi(f)\varphi : f\in L^\infty(X),n\geq 0\right\}=\H
\label{eqw4}
\end{equation}
\end{enumerate}

\end{theorem}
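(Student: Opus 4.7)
The natural first step is to encode the scaling equation and covariance at the level of $L^2(X,\mu)$ itself. Define $S : L^2(X,\mu) \to L^2(X,\mu)$ by $(Sf)(x) = m_0(x) f(r(x))$. Applying the strong invariance \eqref{e1} to the function $w \mapsto |m_0(w)|^2 |f(r(w))|^2$ and then the QMF identity \eqref{eq1.5.1} inside the fiberwise sum, one obtains $\|Sf\|_2 = \|f\|_2$, so $S$ is an isometry on $L^2(X,\mu)$. Letting $\pi_0$ denote the multiplication representation of $L^\infty(X)$ on $L^2(X,\mu)$, a one-line computation gives the intertwining $S\pi_0(h) = \pi_0(h\circ r)\, S$ for every $h \in L^\infty(X)$.

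The plan is to dilate $S$ to a unitary $U$ on a larger Hilbert space $\H$ via the inductive limit of the tower
\[
L^2(X,\mu) \xrightarrow{S} L^2(X,\mu) \xrightarrow{S} L^2(X,\mu) \xrightarrow{S} \cdots
\]
Concretely, form equivalence classes $[f]_n$ with $f \in L^2(X,\mu)$ and $n \in \N$, subject to $[f]_n = [Sf]_{n+1}$, take the natural inner product inherited from each level (well-defined because $S$ is isometric), and complete to obtain $\H$. Let $\iota_n : L^2(X,\mu) \to \H$ be the resulting isometric embeddings. Define $U$ by $U \iota_n(f) := \iota_{n-1}(f)$ for $n \geq 1$ and $U\iota_0(f) := \iota_0(Sf)$; this is well-defined across levels because $\iota_{n-1}(f) = \iota_n(Sf)$, and its inverse is evidently $U^*\iota_n(f) = \iota_{n+1}(f)$, so $U$ is unitary. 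Define $\pi$ on $L^\infty(X)$ by $\pi(h)\iota_n(f) := \iota_n((h\circ r^n)\, f)$; compatibility with the relation $[f]_n = [Sf]_{n+1}$ is exactly the intertwining established above. Finally set $\varphi := \iota_0(1)$.

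The four required properties then fall out directly. The scaling equation reads $U\varphi = \iota_0(S\cdot 1) = \iota_0(m_0) = \pi(m_0)\varphi$. Orthogonality reduces to $\ip{\pi(h)\varphi}{\varphi} = \ip{h}{1}_{L^2} = \int h\, d\mu$. Covariance is the computation $U\pi(h)U^*\iota_n(f) = U\iota_{n+1}((h\circ r^{n+1})f) = \iota_n((h\circ r^{n+1})f) = \pi(h\circ r)\iota_n(f)$. Density is immediate from $U^{-n}\pi(h)\varphi = \iota_n(h)$ together with the density of $L^\infty(X)$ in $L^2(X,\mu)$ and the fact that $\bigcup_n \iota_n(L^2(X,\mu))$ is dense in $\H$ by construction.

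The main obstacle is organizational rather than computational: the two hypotheses on $m_0$ must be identified at the right moments — strong invariance is what converts an $L^2$-norm on $X$ into an average over $r$-fibers, and the QMF identity is what collapses that average to $1$ — and one must track the level index $n$ carefully so that $\pi$ descends to the inductive limit. Once the isometry $S$ is recognized and dilated in this manner, every clause in the theorem is a short verification.
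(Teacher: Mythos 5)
The paper does not prove Theorem \ref{th1.6}; it is quoted from \cite{DuJo05,DuJo07}. Your construction --- the isometry $Sf=m_0\cdot(f\circ r)$ on $L^2(X,\mu)$, shown isometric via strong invariance plus the QMF identity, then dilated to a unitary on the inductive limit of the tower with $\varphi=\iota_0(1)$ --- is precisely the standard argument of those references, and all four properties are verified correctly.
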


\begin{definition}\label{def1.7}
The system $(\H,U,\pi,\varphi)$ in Theorem \ref{th1.6} is called {\it the wavelet representation associated to the QMF $m_0$}.
\end{definition}

To construct a multiresolution, as in \cite{Dau92}, for a wavelet representation, one needs a QMF basis. 

\begin{definition}\label{def1.8}
A {\it{QMF basis}} is a set of $N$ QMF's\quad$m_0, m_1,\dots,m_{N-1}$ such that
\beq\label{e12}
\frac{1}{N}\sum_{r(w)=z}m_i(w)\cj{m_j}(w)=\delta_{ij} , \quad(i,j\in \{0,\dots,N-1\}, z\in X)
\eeq
\end{definition}

We can interpret these conditions in terms of a conditional expectation:

\begin{definition}\label{def1.9} 
Let $\mathcal{B}$ be the Borel sigma-algebra on $X$ and $r^{-1}(\B)$ be the sigma-algebra \\
$r^{-1}(\B)=\{  r^{-1}(B)\mbox{ : }B\in \B \}$. Note that the $r^{-1}(\B)$-measurable functions are of the form $f\circ r$, where $f$ is Borel measurable.
\end{definition}

The conditional expectation from to $\B$ to $r^{-1}(\B)$ is defined by
\beq\label{e13}
\mathbb{E} (f)(z)=\frac{1}{N}\sum_{r(w)=z}f(w),\quad(z\in X)
\eeq
Alternatively $\mathbb{E}(f)$ can be defined, up to $\mu$-measure zero as a $r^{-1}(\B)$-measurable function such that 
\beq\label{e14}
\int fg\circ r d\mu=\int \mathbb{E}(f) g\circ r d\mu,\quad \mbox{for all } g\in L^{\infty}(X,\mu).
\eeq

\begin{proposition}\label{pr10}
A set of functions $(m_i)_{i=0}^{N-1}$ in $L^{\infty}(X,\mu)$ is a QMF basis if and only if 
\beq\label{e15}
\mathbb{E} (m_i\cj{m_j})=\delta_{ij},\quad(i,j\in \{0,\dots N-1\})
\eeq
In this case any function $f\in L^2(X,\mu)$ can be written in the QMF basis as 
\beq\label{e16}
f=\sum_{i=0}^{N-1}\mathbb{E} (f\cj{m_i})m_i
\eeq
\end{proposition}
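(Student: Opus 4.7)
My plan is to handle the biconditional (a) and the expansion formula (b) separately, though both rest on the same fiberwise picture. For (a), the strategy is just to unwind definitions. Reading (\ref{e13}) pointwise, $\mathbb{E}(m_i\overline{m_j})(z) = \frac{1}{N}\sum_{r(w)=z} m_i(w)\overline{m_j(w)}$, so the QMF basis condition (\ref{e12}) translates into the statement $\mathbb{E}(m_i\overline{m_j}) = \delta_{ij}$ essentially on the nose; a one-line argument.

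For (b), I would exploit the fiberwise orthonormal basis structure. Fix $z \in X$ and write $r^{-1}(z) = \{w_1,\dots,w_N\}$. On the finite-dimensional space $\mathbb{C}^N$ equipped with the inner product $\langle u,v\rangle_z := \frac{1}{N}\sum_{k=1}^{N} u_k\overline{v_k}$, the QMF basis condition says the $N$ vectors $V_i(z) := (m_i(w_1),\dots,m_i(w_N))$, $i=0,\dots,N-1$, are orthonormal, and since there are $N$ of them in an $N$-dimensional space they automatically form an orthonormal basis of $\mathbb{C}^N$. Expanding $F(z) := (f(w_1),\dots,f(w_N))$ in this basis, the $i$-th coefficient is exactly $\langle F(z),V_i(z)\rangle_z = \frac{1}{N}\sum_{r(w)=z} f(w)\overline{m_i(w)} = \mathbb{E}(f\overline{m_i})(z)$. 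This yields the identity (\ref{e16}) pointwise $\mu$-a.e., and hence in $L^2(X,\mu)$, since $m_i \in L^\infty$ and $\mathbb{E}(f\overline{m_i}) \in L^2$ (the conditional expectation is a contraction on $L^2$).

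A more algebraic alternative for (b), perhaps closer to the paper's flavor, is to set $\tilde f := \sum_i \mathbb{E}(f\overline{m_i})\,m_i$ and use the module property of conditional expectation (that $r^{-1}(\B)$-measurable factors pull outside $\mathbb{E}$) together with $\mathbb{E}(m_i\overline{m_j}) = \delta_{ij}$ to compute $\mathbb{E}(\tilde f\,\overline{m_j}) = \sum_i \mathbb{E}(f\overline{m_i})\,\mathbb{E}(m_i\overline{m_j}) = \mathbb{E}(f\overline{m_j})$, so $\mathbb{E}((f-\tilde f)\overline{m_j}) = 0$ for every $j$. The fiberwise ONB argument then forces $(f - \tilde f)\mid_{r^{-1}(z)} = 0$ for $\mu$-a.e.\ $z$. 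The main (mild) obstacle throughout is purely bookkeeping: one must track that each $\mathbb{E}(f\overline{m_i})$ is genuinely $r^{-1}(\B)$-measurable and hence constant on each fiber, so that it acts as a ``scalar'' relative to $\mathbb{C}^N$ and can legitimately be pulled out of the conditional expectation.
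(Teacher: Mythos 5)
Your proof is correct and follows essentially the same route as the paper: part (a) by unwinding definitions, and part (b) by viewing the restrictions of the $m_i$ to each fiber $r^{-1}(z)$ as an orthonormal basis of $\mathbb{C}^N$ (you absorb the $1/N$ into the inner product where the paper normalizes the vectors by $1/\sqrt{N}$) and expanding the fiberwise vector of values of $f$ in that basis. Your closing remark about tracking the $r^{-1}(\mathcal{B})$-measurability of the coefficients $\mathbb{E}(f\overline{m_i})$ is exactly the point the paper handles by indexing the fiber over $r(z)$ and reading off the component at $w=z$.
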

\begin{proof}
The first statement is clear. For the second, define for $f\in L^2(X,\mu)$ the vector-valued function
$F(f)(z)=(f(w))_{r(w)=r(z)}\in \bc^N$. 
Note that the QMF basis property implies that \\$(F(\frac{1}{\sqrt{N}}m_i )(z))_{i=0}^{N-1}$ is an orthonormal basis in $\bc^n$. Then for $z\in X$ 
$$F(f)(z)=  \sum_{i=0}^{N-1}\ip{F(f)(z)}{F( \frac{1}{\sqrt{N}}m_i )(z)}_{\bc^N}F( \frac{1}{\sqrt{N}}m_i )(z) =\sum_{i=0}^{N-1}\mathbb{E}(f\cj{m_i})(z)F(m_i)(z) $$
Then looking at the first component (since $r(z)=r(z)$ one can take $w=z$) we get (\ref{e16}).   
\end{proof}

Next, we show how a QMF basis induces a representation of the Cuntz algebra. 
\begin{proposition}\label{pr11}
Let $(m_i)_{i=0}^{N-1}$ be a QMF basis. Define the operators on $L^2(X,\mu)$
\beq\label{e17}
S_i(f)=m_if\circ r,\quad i=0,\dots,N-1
\eeq
Then the operators $S_i$ are isometries and they form a representation of the Cuntz algebra $\mathcal{O}_N$, i.e.
\beq\label{e18}
 S_i^*S_j=\delta_{ij},\quad i,j=0,\dots,N-1,\quad\quad \sum_{i=0}^{N-1}S_iS_i^*=I
\eeq
The adjoint of $S_i$ is given by the formula
\beq\label{e19}
S_i^*(f)(z)=\frac{1}{N}\sum_{r(w)=z}\cj{m_i}(w)f(w)
\eeq
\end{proposition}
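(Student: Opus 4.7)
My plan is to first derive the adjoint formula \eqref{e19} directly from strong invariance, then deduce the two Cuntz relations essentially as two dual manifestations of the QMF basis property, the first giving $S_i^*S_j=\delta_{ij}I$ (hence isometry as the special case $i=j$) and the second giving the completeness relation $\sum_iS_iS_i^*=I$.

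For the adjoint, I compute $\ip{S_jf}{g}$ for $f,g\in L^2(X,\mu)$, unfold the definition to $\int m_j(w)f(r(w))\overline{g(w)}\,d\mu(w)$, and then apply the strong invariance identity \eqref{e1} to the integrand (viewed as a function of $w$). Because $f\circ r$ is constant on fibers of $r$, it pulls out of the inner sum, leaving
$$\ip{S_jf}{g}=\int f(z)\,\overline{\left(\tfrac{1}{N}\sum_{r(w)=z}\overline{m_j}(w)g(w)\right)}d\mu(z),$$
which identifies $S_j^*$ as claimed in \eqref{e19}.

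Next I compose: using \eqref{e17} and \eqref{e19},
$$S_i^*S_jf(z)=\frac{1}{N}\sum_{r(w)=z}\overline{m_i}(w)m_j(w)f(r(w))=\left(\frac{1}{N}\sum_{r(w)=z}\overline{m_i}(w)m_j(w)\right)f(z)=\mathbb{E}(m_j\overline{m_i})(z)\,f(z),$$
since $f\circ r$ equals $f(z)$ on the fiber over $z$. Proposition \ref{pr10} then gives $\mathbb{E}(m_j\overline{m_i})=\delta_{ij}$, so $S_i^*S_j=\delta_{ij}I$. In particular $S_i^*S_i=I$ shows each $S_i$ is an isometry.

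The step I expect to be the main obstacle is the completeness relation $\sum_iS_iS_i^*=I$, which is the ``dual'' form of the QMF basis property and requires reading \eqref{e12} as orthonormality in $\bc^N$ and invoking Parseval. Computing,
$$\sum_{i=0}^{N-1}S_iS_i^*f(w)=\frac{1}{N}\sum_{r(w')=r(w)}\left(\sum_{i=0}^{N-1}m_i(w)\overline{m_i}(w')\right)f(w'),$$
so I need the pointwise identity $\sum_im_i(w)\overline{m_i}(w')=N\delta_{w,w'}$ whenever $r(w)=r(w')$. For this I use the vector-valued map $F$ from the proof of Proposition \ref{pr10}: the QMF basis property says that for each fiber $r^{-1}(z)$ the vectors $F(\tfrac{1}{\sqrt N}m_i)(z)\in\bc^N$ form an orthonormal basis, so Parseval applied to standard basis vectors of $\bc^N$ (indexed by fiber elements) gives exactly the desired identity. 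Substituting this back collapses the sum over the fiber to a single term and yields $\sum_iS_iS_i^*f(w)=f(w)$, completing the verification of \eqref{e18}.
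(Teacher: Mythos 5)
Your proposal is correct and follows essentially the same route as the paper: the adjoint formula is obtained from strong invariance exactly as in the paper, and the two Cuntz relations are then read off from Proposition \ref{pr10} (the paper merely says they are ``easily checked'' with it). Your fiberwise Parseval identity $\sum_i m_i(w)\overline{m_i}(w')=N\delta_{ww'}$ is just an unwinding of the expansion formula \eqref{e16} (equivalently, of the unitarity of the fiber matrix $\tfrac{1}{\sqrt N}[m_i(w)]_{w\in r^{-1}(z)}$ that the paper uses in Theorem \ref{th2.9}), so this is the intended argument with the details filled in.
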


\begin{proof}
We compute the adjoint: take $f$, $g$ in $L^2(X,\mu)$. We use the strong invariance of $\mu$.
$$\ip{S_i^*f}{g}=\int f\cj{m}_i\cj{g\circ r}\,d\mu=\int\frac{1}{N}\sum_{r(w)=z}\cj{m_i}(w)f(w)\cj{g}(z)d\mu(z)$$
Then (\ref{e19}) follows. The Cuntz relations in (\ref{e18}) are then easily checked with Proposition \ref{pr10}.
\end{proof}

Every QMF basis generates a multiresolution for the wavelet representation associated to $m_0$. Since the ideas are simple and are the same as in the classical wavelet theory presented  in \cite{Dau92}, we omit the proof. Note though, that the intersection of the resolution spaces might be non-trivial (for example, if $m_0=1$ then $1$ is contained in this intersection). 

\begin{proposition}
Let $(m_i)_{i=0}^{N-1}$ be a QMF basis. Let $(\H, U, \pi, \varphi)$ be the wavelet representation associated to $m_0$. Define


\beq\label{eqq1}
 V_0:=\Span\left\{\pi(f)\varphi : f\in L^\infty(X)\right\},\quad\quad V_n=U^{-n}V_0,\quad n\in\Z
 \eeq

\beq\label{eqq2}
\psi_i=U^{-1}\pi(m_i)\varphi,\quad i=1,\dots,N-1
\eeq

\beq\label{eqq3}
W_i:=\Span\left\{\pi(f)\psi_i : f\in L^\infty(X)\right\}
\eeq

Then

\begin{enumerate}

\item
$\cj{\cup_{n\in\Z} V_n}=\H$

\item
$V_1=V_0\oplus W_1\oplus \dots \oplus W_{N-1}$

\item
If $\cap_{n\in\Z}V_n=\{0\}$ then $$\bigoplus_{n\in\Z}U^n\left(  W_1\oplus \dots \oplus W_{N-1} \right)=\H$$

\end{enumerate}

\end{proposition}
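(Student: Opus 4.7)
The plan is to deduce all three parts from Theorem~\ref{th1.6}, the covariance-induced intertwining $\pi(g)U^{-1}=U^{-1}\pi(g\circ r)$, and the reformulation of the QMF basis conditions through conditional expectation in Proposition~\ref{pr10}. Part (i) is nearly immediate: covariance gives $V_n=\Span\{U^{-n}\pi(f)\varphi:f\in L^\infty(X)\}$, and the density assertion (\ref{eqw4}) already guarantees that $\cup_{n\geq 0}V_n$ is dense in $\H$, so \emph{a fortiori} so is $\cup_{n\in\Z}V_n$.

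For part (ii), the scaling equation gives $\psi_0:=U^{-1}\pi(m_0)\varphi=U^{-1}U\varphi=\varphi$, so both $V_0$ and the $W_i$ for $i\geq 1$ share the uniform description $\Span\{\pi(g)\psi_i:g\in L^\infty(X)\}$. I carry out three substeps. \emph{Containment}: the intertwining gives $\pi(g)\psi_i=U^{-1}\pi((g\circ r)m_i)\varphi\in U^{-1}V_0=V_1$. \emph{Orthogonality}: for $i,j\in\{0,\dots,N-1\}$, unitarity of $U$ and (\ref{eqw3}) yield
\[
\ip{\pi(f)\psi_i}{\pi(g)\psi_j}=\ip{\pi((f\circ r)m_i)\varphi}{\pi((g\circ r)m_j)\varphi}=\int (f\cj g)\circ r\cdot m_i\cj{m_j}\,d\mu,
\]
and (\ref{e14}) combined with $\mathbb{E}(m_i\cj{m_j})=\delta_{ij}$ and strong invariance reduces this to $\delta_{ij}\int f\cj g\,d\mu$. \emph{Span}: for any $U^{-1}\pi(f)\varphi\in V_1$ with $f\in L^\infty(X)$, the expansion (\ref{e16}) writes $f=\sum_i(h_i\circ r)m_i$, where $h_i\circ r:=\mathbb{E}(f\cj{m_i})$ is $r^{-1}(\B)$-measurable and bounded (since $\mathbb{E}$ contracts $L^\infty$); then
\[
U^{-1}\pi(f)\varphi=\sum_{i=0}^{N-1}\pi(h_i)U^{-1}\pi(m_i)\varphi=\sum_{i=0}^{N-1}\pi(h_i)\psi_i\in V_0+W_1+\dots+W_{N-1}.
\]

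For part (iii), set $W:=W_1\oplus\dots\oplus W_{N-1}$. Applying $U^{-n}$ to (ii) gives $V_{n+1}=V_n\oplus U^{-n}W$ for every $n\in\Z$. Telescoping forward produces $V_n=V_0\oplus\bigoplus_{k=0}^{n-1}U^{-k}W$ for $n\geq 1$, while telescoping backward produces $V_0\ominus V_{-n}=\bigoplus_{k=1}^n U^k W$ for $n\geq 1$. The hypothesis $\cap_n V_n=\{0\}$ forces $V_0=\bigoplus_{k\geq 1}U^k W$, and combining with part (i),
\[
\H=\overline{\cup_n V_n}=V_0\oplus\bigoplus_{k\geq 0}U^{-k}W=\bigoplus_{n\in\Z}U^n W.
\]
The main subtlety is the span step in (ii): it decomposes only vectors of the special form $U^{-1}\pi(f)\varphi$ with $f\in L^\infty$, and the passage to the closure $V_1$ must be handled by continuity, which is supplied by the pairwise orthogonality that makes projection onto each summand contractive.
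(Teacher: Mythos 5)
Your proof is correct; the paper in fact omits the proof of this proposition, appealing to the classical multiresolution argument of \cite{Dau92}, and your argument is exactly that standard argument (orthogonal splitting of $V_1$ via the QMF relations and conditional expectation, then telescoping under the unitary $U$). The only point left implicit is that the nesting $V_n\subset V_{n+1}$ (which follows from your containment step with $i=0$, since $U\pi(f)\varphi=\pi((f\circ r)m_0)\varphi\in V_0$) is what allows you to pass from the density of the closed span in \eqref{eqw4} to the density of the union $\cup_{n}V_n$ itself in part (i).
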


A particular case which we will use in Section 3, is that of QMF bases generated by Hadamard matrices which are defined from a finite set $B$ and its spectrum $\Lambda$. 

\begin{definition} Denote by $e_\lambda(x):=e^{2\pi i \lambda\cdot x}$ for $\lambda, x\in\br^d$. 
Let $B$ be a finite subset of $\R^d$, $|B|=:N$. We say that a finite set $\Lambda$ in $\R^d$ is a spectrum for $B$ if $|\Lambda|=N$ and the matrix
$$\frac{1}{\sqrt{N}}[e^{2\pi ib\cdot\lambda}]_{b\in B}^{\lambda\in\Lambda}$$
is unitary. Let $B$ and $L$ be finite subsets of $\Z^d$, $|B|=:N=|L|$ and let $R$ be an expansive $d\times d$ integer matrix. We say that $(B,L)$ 
is a Hadamard pair with scaling factor $R$ if $L$ is a spectrum for $R^{-1}B$; equivalently, the matrix

$$\frac{1}{\sqrt{N}}[e^{2\pi iR^{-1}b\cdot l}]_{b\in B}^{l\in L}$$
is unitary.

\end{definition}

\begin{example} \label{ex2.10}
Consider the setting in Example \ref{ex2}. We have the following equivalence:

\begin{proposition}\label{pr2.8}
A finite set $\Lambda$ in $\br^d$ is a spectrum for $R^{-1} B$ if and only if $(e_{\lambda})_{\lambda\in\Lambda}$ is a QMF basis. Let $L$ be a finite subset of $\Z^d$. Then $(B,L)$  is a Hadamard pair with scaling factor $R$ if and only if $(e_l)_{l\in L}$ is a QMF basis. 

\end{proposition}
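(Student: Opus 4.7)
The plan is to exploit the concrete form of the $N$-to-one map $r$ in Example \ref{ex2}, where $r^{-1}(z) = \{\tau_b(z) : b \in B\}$ with $\tau_b(z) = R^{-1}(z+b)$, and apply the QMF-basis criterion of Definition \ref{def1.8} directly to the exponentials $e_\lambda$.

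First I would write out the basic identity
\begin{equation*}
e_\lambda(\tau_b(z))\,\overline{e_{\lambda'}(\tau_b(z))} = e^{2\pi i (\lambda-\lambda')\cdot R^{-1} z}\,e^{2\pi i (\lambda-\lambda')\cdot R^{-1} b}.
\end{equation*}
Summing over $b\in B$ and dividing by $N$ gives
\begin{equation*}
\frac{1}{N}\sum_{r(w)=z} e_\lambda(w)\,\overline{e_{\lambda'}(w)} = e^{2\pi i (\lambda-\lambda')\cdot R^{-1} z}\cdot\frac{1}{N}\sum_{b\in B} e^{2\pi i (\lambda-\lambda')\cdot R^{-1} b}.
\end{equation*}
This factorization is the heart of the matter: the $z$-dependence is concentrated entirely in a single exponential prefactor.

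Next I would observe that $(e_\lambda)_{\lambda\in\Lambda}$ is a QMF basis iff the left-hand side equals $\delta_{\lambda,\lambda'}$ for every $z\in X_B$. When $\lambda=\lambda'$ both factors on the right are $1$, so that condition is automatic. When $\lambda\neq\lambda'$, the prefactor $e^{2\pi i (\lambda-\lambda')\cdot R^{-1} z}$ is never zero, so vanishing for all $z$ is equivalent to vanishing of the $b$-sum:
\begin{equation*}
\frac{1}{N}\sum_{b\in B} e^{2\pi i (\lambda-\lambda')\cdot R^{-1} b} = 0.
\end{equation*}
Combining both cases, the QMF basis property is exactly orthonormality of the columns (indexed by $\lambda$) of the $|B|\times|\Lambda|$ matrix $\tfrac{1}{\sqrt{N}}\bigl[e^{2\pi i R^{-1} b\cdot\lambda}\bigr]_{b\in B}^{\lambda\in\Lambda}$. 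Since $|\Lambda|=N=|B|$, this square matrix has orthonormal columns iff it is unitary, which is precisely the definition of $\Lambda$ being a spectrum for $R^{-1}B$.

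The Hadamard pair statement is then obtained by specializing $\Lambda=L\subset\Z^d$ in the first equivalence; no further argument is required. There is essentially no obstacle here — the content is just the computation above, the only thing to be careful about is to argue that vanishing of the $z$-dependent expression forces vanishing of the coefficient sum (which follows at once because a nonzero character cannot be identically zero on $X_B$, or even on any nontrivial set, since it is a continuous function of $z$ with constant modulus).
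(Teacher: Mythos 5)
Your proposal is correct and follows essentially the same route as the paper: both compute $\frac{1}{N}\sum_{r(w)=z}e_\lambda(w)\overline{e_{\lambda'}(w)}$ using $r^{-1}(z)=\{\tau_b(z):b\in B\}$, factor out the nonvanishing prefactor $e^{2\pi i R^{-1}z\cdot(\lambda-\lambda')}$, and identify the remaining $b$-sum condition with orthonormality of the columns of $\frac{1}{\sqrt{N}}\bigl[e^{2\pi i R^{-1}b\cdot\lambda}\bigr]_{b\in B}^{\lambda\in\Lambda}$, with the Hadamard-pair claim as the special case $\Lambda=L\subset\Z^d$. Your explicit case split on $\lambda=\lambda'$ versus $\lambda\neq\lambda'$ and the remark that a character cannot vanish are just slightly more detailed versions of what the paper leaves implicit.
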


\begin{proof}
We have 

$$ \frac{1}{N}\sum_{r(w)=z} e_{\lambda}(w) \cj{ e_{\lambda'}(w)}=\frac{1}{N}\sum_{b\in B} e^{2\pi i\tau_b(z)\cdot (\lambda-\lambda')}=\frac{1}{N}\sum_{b\in B}e^{2\pi i R^{-1}(z+b)\cdot (\lambda-\lambda')}$$
$$=e^{2\pi iR^{-1}(z)\cdot(\lambda-\lambda')}\mbox{ } \frac{1}{N}\sum_{b\in B}e^{2\pi iR^{-1}b\cdot (\lambda-\lambda')} $$
Thus, the QMF basis condition is equivalent to 
$$ \frac{1}{N}\sum_{b\in B}e^{2\pi iR^{-1}b\cdot (\lambda-\lambda')}=\delta_{\lambda\lambda'} $$
which is exactly the orthogonality of the columns of the matrix
$$\frac{1}{\sqrt{N}}[e^{2\pi iR^{-1}b\cdot\lambda}]_{b\in B}^{\lambda\in\Lambda}$$
The equivalence for Hadamard pairs follows as a particular case.

\end{proof}

If $B$ is a finite set and $R^{-1}B$ has spectrum $\Lambda$, then the set $\{e_\lambda:\lambda\in\Lambda\}$ is a QMF basis, by Proposition \ref{pr2.8}. Then, with Proposition \ref{pr11}, the operators $S_\lambda f=e_\lambda f\circ r$ form a representation of the Cuntz algebra. Such representations were studied in \cite{DuJo12cu}. 
\end{example}

The next theorem shows how QMF bases can be constructed from unitary matrix valued functions as in the work of Bratteli and Jorgensen \cite{BrJo02a,BrJo02b,BrJo00,BrJo97}, now in a more general context. 

\begin{theorem}\label{th2.9}
Fix $(m_i)_{i=0}^{N-1}$ a QMF basis. There is a one-to-one correspondence between the following two sets:

\begin{enumerate}
\item
QMF bases $(m_i')_{i=0}^{N-1}$

\item
Unitary valued maps $A:X\rightarrow U_N(\bc)$
\end{enumerate}
Given a QMF basis  $(m_i')_{i=0}^{N-1}$ the matrix $A$ with entries \\
$$(1)\quad A_{ij}(z)=\frac{1}{N}\sum_{r(w)=z} m_i'(w)\cj{m}_j(w),\quad (z\in X, i, j=0,\dots,N-1)$$
is unitary.\\
Given a unitary-valued map $A:X\rightarrow U_N(\bc)$, the functions form a QMF basis
$$(2)\quad m_i'(z)=\sum_{j=0}^{N-1}A_{ij}(r(z))m_j(z),\quad (z\in X, i=0,\dots N-1)$$
 These correspondences are inverse to each other.
\end{theorem}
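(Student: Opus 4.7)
The plan is to use the expansion formula \eqref{e16} from Proposition \ref{pr10} as the bridge between the two sides: any $L^2$ function has a unique expansion in a QMF basis, and the coefficient functions of that expansion are exactly the entries of the matrix $A$.

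First I would verify the forward direction (1). Given another QMF basis $(m_i')_{i=0}^{N-1}$, apply the expansion \eqref{e16} to each $m_i'$ with respect to the fixed QMF basis $(m_j)$: this gives
\[
m_i' = \sum_{j=0}^{N-1} \mathbb{E}(m_i' \overline{m_j})\, m_j = \sum_{j=0}^{N-1} (A_{ij}\circ r)\, m_j,
\]
since $\mathbb{E}(m_i'\overline{m_j})$ is $r^{-1}(\B)$-measurable and therefore of the form $A_{ij}\circ r$. To show $A(z)$ is unitary, I would compute $(AA^*)_{ij}(z)$ using the key property of the conditional expectation that functions of the form $f\circ r$ pass outside the sum $\frac{1}{N}\sum_{r(w)=z}$, together with the QMF relation $\mathbb{E}(m_k\overline{m_l}) = \delta_{kl}$:
\[
\delta_{ij} = \mathbb{E}(m_i'\overline{m_j'})(z) = \sum_{k,l} A_{ik}(z)\overline{A_{jl}(z)}\,\mathbb{E}(m_k\overline{m_l})(z) = \sum_k A_{ik}(z)\overline{A_{jk}(z)}.
\]

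Next I would verify the backward direction (2). Given $A:X\to U_N(\bc)$, define $m_i'(z) = \sum_j A_{ij}(r(z))m_j(z)$, and compute $\mathbb{E}(m_i'\overline{m_j'})$ by the same pulling-out-of-$r$ trick. Since $A_{ik}\circ r$ and $\overline{A_{jl}}\circ r$ are constant on the fiber $r^{-1}(z)$, they factor out of $\mathbb{E}$, leaving
\[
\mathbb{E}(m_i'\overline{m_j'})(z) = \sum_{k,l} A_{ik}(z)\overline{A_{jl}(z)}\,\delta_{kl} = (AA^*)_{ij}(z) = \delta_{ij},
\]
so the $(m_i')$ form a QMF basis.

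Finally, I would check that the two correspondences are mutual inverses. Going (1)$\to$(2): starting from $(m_i')$, producing $A$ via (1), then reconstructing via (2) just recovers the expansion formula $m_i' = \sum_j (A_{ij}\circ r) m_j$ already derived above. Going (2)$\to$(1): starting from $A$, defining $m_i'$ via (2), the computation $\mathbb{E}(m_i'\overline{m_j})(z) = \sum_k A_{ik}(z)\mathbb{E}(m_k\overline{m_j})(z) = A_{ij}(z)$ recovers $A$. There is no serious obstacle here; the only subtlety is the clean use of the identity $\mathbb{E}((f\circ r)g) = f\cdot\mathbb{E}(g)$, which follows directly from \eqref{e13} because the inner sum is over the fiber of $r$.
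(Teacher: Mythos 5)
Your proof is correct, and it is organized differently from the paper's. The paper proves unitarity of $A$ and the inverse property by two explicit double sums over the fibers $r^{-1}(z)$, both hinging on the identity $\sum_j \cj{m_j(w)}m_j(w')=N\delta_{ww'}$ for $w,w'$ in the same fiber (orthogonality of the \emph{columns} of the unitary matrix $\frac{1}{\sqrt N}[m_i(w)]$). You instead route everything through the expansion formula \eqref{e16} of Proposition \ref{pr10}: the identity $m_i'=\sum_j(A_{ij}\circ r)m_j$ is obtained at once from that proposition, unitarity of $A$ then falls out by substituting this expansion into $\mathbb{E}(m_i'\cj{m_j'})=\delta_{ij}$ and using the module property $\mathbb{E}((f\circ r)g)=f\,\mathbb{E}(g)$, and the check that the two maps are mutually inverse reduces to the uniqueness of the expansion in one direction and to $\mathbb{E}(m_i'\cj{m_j})=A_{ij}$ in the other. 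The underlying computation is the same (Proposition \ref{pr10} itself rests on the column-orthogonality fact), but your version is more structural and anticipates the paper's own remark that $A\circ r$ is just the change-of-basis matrix for the $L^\infty(X,r^{-1}(\B),\mu)$-valued inner product $\ip{f}{g}=\mathbb{E}(f\cj g)$; the paper's version is self-contained and makes the fiberwise mechanism explicit. The only points worth being careful about, which affect the paper equally, are that $\mathbb{E}(f)$ in \eqref{e13} is written as a function of $z$ while the $r^{-1}(\B)$-measurable representative is $\mathbb{E}(f)\circ r$ (your phrasing handles this correctly), and that one should note $m_i'\in L^\infty$ and the measurability of $z\mapsto A(z)$, both of which are immediate.
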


\begin{proof}
The result requires some simple computations
$$\sum_{j=0}^{N-1}A_{ij}(z)\cj{A_{i'j}(z)}=\frac{1}{N^2}\sum_j\sum_{r(w)=z}m'_i(w)\cj{m_j(z)}\cdot \sum_{r(w')=z}\cj{m'_{i'}(w')\cj{m_j(w')}} =$$
$$\frac{1}{N^2}\sum_{w,w'}m'_i(w)\cj{m'_{i'}(w')}\cdot \sum_{j}\cj{m_j(w)}m_j(w')=\frac{1}{N}\sum_{w,w'}m_i'(w)\cj{m'_{i'}(w')}\delta_{w,w'}=\delta_{ii'}$$
Note that we used the equality 
$$\sum_{j}\cj{m_j(w)}m_j(w')=\delta_{ww'}$$
which follows from the fact that the matrix 
$$\frac{1}{\sqrt{N}}\left[  m_i(w) \right]_{w\in r^{-1}(z)}^{i=0,\dots N-1}$$
is unitary, which, in turn, is a consequence of the QMF property. Hence $A$ is unitary.\\
If $A$ is unitary, we check the QMF relations:

$$\frac{1}{N}\sum_{r(w)=z}m'_i(w)\cj{m'_j(w)}=\frac{1}{N}\sum_w\sum_k A_{ik}(r(w))m_k(w)\sum_l\cj{A_{jl}(r(w))m_l(w)}=$$

$$  \frac{1}{N}\sum_{k,l}A_{ik}(z)\cj{A_{jl}(z)}\sum_wm_k(w)\cj{ m_l(w) }=\sum_{k,l}A_{jk}(z)\cj{A_{jl}(z)}\delta_{kl}=\delta_{ij}$$

Hence $(m'_i)_{i=0}^{N-1}$ is a QMF basis.\\
The fact that the two correspondences are inverse to each other follows from the next computation:
$$\sum_jA_{ij}(r(z))m_j(z)=\sum_j\left( \frac{1}{N} \sum_{r(w)=r(z)}m_i'(w)  \cj{m_j}(w) \right) m_j(z)=\sum_{r(w)=r(z)}m_i'(w)\cdot\frac{1}{N}\sum_j\cj{m_j}(w)m_j(z)$$
$$=\sum_{r(w)=r(z)}m_i'(w)\delta_{wz}=m_i'(z)$$
\end{proof}

\begin{remark}
Note that the equation (1) can be reformulated as 
$A_{ij}(r(z))=\mathbb{E}(m_i'\cj{m}_j)$.
The conditional expectation $\mathbb{E}$ can be regarded as a $L^{\infty}(X,\mu)$-valued inner product 
$\ip{f}{g}_{L^{\infty}(X,\mu)}=\mathbb{E}(f\cj{g})$ for $f$, $g\in L^{\infty}(X,\mu)$.
The QMF basis condition is equivalent to the orthogonality of $(m_i)_{i=0}^{N-1}$ with respect to this inner product.  Since the dimension of $L^{\infty}(X,\mu)$ as a module over $\mathbb{E}(L^{\infty}(X,\mu))=L^{\infty}(X, r^{-1}(\B), \mu)$
is $N$, the completeness is automatic, so $(m_i)_{i=0}^{N-1}$ is an orthonormal basis for this inner product.  Thus $A\circ r$ is the change of base matrix from $(m_i)$ to $(m'_i)$. Equation (2) can be understood in the sense that a unitary matrix maps orthonormal bases into orthonormal bases.
 
\end{remark}

\section{Orthonormal bases generated by Cuntz algebras}

Next, we present the central result of our paper. It gives a general criterion for a family generated by the Cuntz isometries to be an orthonormal basis. 
\begin{theorem}\label{diez}
Let $\H$ be a Hilbert space and $(S_i)_{i=0}^{N-1}$ be a representation of the Cuntz algebra $\mathcal{O}_N$. Let $\mathcal{E}$ be an orthonormal set in $\H$ and $f:X\rightarrow \H$ a norm continuous function on a topological space $X$ with the following properties:

\begin{enumerate}

\item $\mathcal{E}=\cup_{i=0}^{N-1} S_i\mathcal{E}$.

\item $\Span\{ f(t): t\in X\}=\H$ and $\norm{f(t)}=1$, for all $t\in X$.

\item There exist functions $\mathfrak{m}_i: X\rightarrow\C$, $g_i:X\rightarrow X$, $i=0,\dots, N-1$ such that 
\beq\label{ceq1}
S_i^*f(t)=\mathfrak{m}_i(t)f(g_i(t)), \quad t\in X.
\eeq

\item There exist $c_0\in X$ such that $f(c_0)\in \Span \mathcal{E}$.

\item The only function $h\in\mathcal{C}(X)$ with $h\geq 0$, $h(c)=1$, $\forall$ $c\in\{ x\in X:f(x)\in \Span\mathcal{E}\}$, and 
\beq\label{ceq2}
h(t)=\sum_{i=0}^{N-1}\abs{\mathfrak{m}_i(t)}^2h(g_i(t)),\quad t\in X
\eeq
are the constant functions.

\end{enumerate}
Then $\mathcal{E}$ is an orthonormal basis for $\H$.
\end{theorem}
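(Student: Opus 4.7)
The plan is to let $V := \overline{\operatorname{span}}\,\mathcal{E}$ and let $P$ be the orthogonal projection onto $V$, and then prove $V=\H$. The strategy is to manufacture a continuous nonnegative function on $X$ satisfying the hypotheses of condition (v), invoke (v) to conclude it is constant, and use (iv) to pin that constant at $1$; that will force $f(t)\in V$ for every $t$, and the density in (ii) finishes the argument.

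The key preliminary observation is that both $S_i$ and $S_i^*$ preserve $V$. Invariance under $S_i$ is immediate from (i): $S_i\mathcal{E}\subseteq\mathcal{E}\subseteq V$. For $S_i^*$, the Cuntz relations $S_i^*S_j=\delta_{ij}$ together with the orthonormality of $\mathcal{E}$ make the sets $S_i\mathcal{E}$ mutually orthogonal, so (i) actually expresses $\mathcal{E}$ as the disjoint union $\bigsqcup_i S_i\mathcal{E}$. Hence each $e\in\mathcal{E}$ admits a unique representation $e=S_{i_0}e'$ with $e'\in\mathcal{E}$, whence $S_j^*e=\delta_{j,i_0}\,e'\in V$. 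Boundedness of $S_j^*$ extends this to $S_j^*V\subseteq V$, and consequently $P$ commutes with every $S_i$ and every $S_i^*$.

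Next define $h:X\to\R$ by $h(t):=\norm{Pf(t)}^2$. Norm continuity of $f$ and boundedness of $P$ give continuity of $h$, with $0\le h(t)\le 1$. Using $\sum_iS_iS_i^*=I$ together with the commutation $PS_i^*=S_i^*P$,
\begin{equation*}
h(t)=\sum_{i=0}^{N-1}\ip{S_iS_i^*Pf(t)}{f(t)}=\sum_{i=0}^{N-1}\norm{PS_i^*f(t)}^2=\sum_{i=0}^{N-1}\abs{\mathfrak{m}_i(t)}^2\, h(g_i(t)),
\end{equation*}
which is precisely the functional equation \eqref{ceq2}. Moreover, whenever $f(c)\in V$ one has $Pf(c)=f(c)$ and $\norm{f(c)}=1$ by (ii), so $h(c)=1$; condition (iv) ensures this set is nonempty. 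Hypothesis (v) then forces $h$ to be constant, and $h(c_0)=1$ identifies that constant as $1$. Therefore $\norm{Pf(t)}=\norm{f(t)}$ for every $t\in X$, so $f(t)\in V$ everywhere, and the density of $\{f(t):t\in X\}$ in $\H$ asserted in (ii) yields $V=\H$.

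The only step that requires even a moment of thought is the invariance $S_i^*V\subseteq V$, which is where condition (i) is really used; once that commutation is in hand, the function $h$ built from the projection $P$ plugs directly into the functional-equation framework of (v), and the rest is essentially bookkeeping.
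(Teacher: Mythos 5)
Your proof is correct and takes essentially the same route as the paper: both define $h(t)=\norm{Pf(t)}^2$ with $P$ the orthogonal projection onto $\Span\mathcal{E}$, verify the transfer-operator identity \eqref{ceq2}, and then invoke (v) together with (iv) to force $h\equiv 1$ and hence $f(t)\in\Span\mathcal{E}$ for all $t$. The only difference is cosmetic: the paper checks \eqref{ceq2} by expanding $\norm{Pf(t)}^2=\sum_{e\in\mathcal{E}}\abs{\ip{f(t)}{e}}^2$ and regrouping over the disjoint union $\mathcal{E}=\bigcup_i S_i\mathcal{E}$, whereas you first show that $\Span\mathcal{E}$ reduces each $S_i$ and insert $\sum_i S_iS_i^*=I$ --- both arguments rest on the same mutual orthogonality of the sets $S_i\mathcal{E}$.
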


\begin{proof}Define $$h(t):=\sum_{e\in\mathcal{E}}\abs{\ip{f(t)}{e}}^2=\norm{Pf(t)}^2,\quad t\in X$$
where $P$ is the orthogonal projection onto the closed linear span of $\mathcal{E}$.

Since $t\mapsto f(t)$ is norm continuous we get that $h$ is continuous. Clearly $h\geq 0$. Also, if $f(c)\in\Span\mathcal{E}$, then $\norm{Pf(c)}=\norm{f(c)}=1$ so $h(c)=1$. In particular, from (ii) and (iv), $h(c_0)=1$. We check (\ref{ceq2}). Since the sets $S_i\mathcal{E}$, $i=0,\dots N-1$ are mutually orthogonal, the union in (i) is disjoint. Therefore for all $t\in X$ :
$$  h(t)=\sum_{i=0}^{N-1}\sum_{e\in\mathcal{E}}\abs{\ip{f(t)}{S_ie}}^2= \sum_{i=0}^{N-1}\sum_{e\in\mathcal{E}}\abs{\ip{S_i^*f(t)}{e}}^2=
\sum_{i=0}^{N-1}\abs{\mathfrak{m}_i(t)}^2\sum_{e\in\mathcal{E}}\abs{\ip{f(g_i(t))}{e}}^2=$$
$$=\sum_{i=0}^{N-1}\abs{\mathfrak{m}_i(t)}^2h(g_i(t))$$
By (v), $h$ is constant and, since $h(c_0)=1$, $h(t)=1$ for all $t\in X$. Then $\norm{Pf(t)}=1$ for all $t\in X$. Since $\norm{f(t)}=1$ it follows that $f(t)\in\mbox{span} \mathcal{E}$ for all $t\in X$. But the vectors $f(t)$ span $\H$ so $\Span\mathcal{E}=\H$ and $\mathcal{E}$ is an orthonormal basis.

\end{proof}

\begin{remark}
The operators of the form 
$$Rh(t)=\sum_{i=0}^{N-1}|\mathfrak m_i(t)|^2h(g_i(t)),\quad t\in X, h\in C(X),$$
that appear in \eqref{ceq2}, are sometimes called Ruelle operators or transfer operators, see e.g. \cite{Bal00}.
\end{remark}

\subsection{Piecewise exponential bases on fractals}
We apply Theorem \ref{diez} to the setting of Example \ref{ex2.10}, in dimension $d=1$ for affine iterated function systems, when the set $\frac1RB$ has a spectrum $L$. 

\begin{definition}Let $L$ in $\br$, $|L|=N$, $R>1$ such that $L$ is a spectrum for the set $\frac{1}{R}B$. We say that $c\in\br$ is an {\it{extreme cycle point}} for $(B,L)$ if there exists
$l_0,l_1,\dots,l_{p-1}$ in $L$ such that, if $c_0=c$, $c_1=\frac{c_0+l_0}{R}, c_2=\frac{c_1+l_1}{R}\dots c_{p-1}=\frac{c_{p-2}+l_{p-2}}{R}$ then $\frac{c_{p-1}+l_{p-1}}{R}=c _0$, and 
$\abs{m_B(c_i)}=1$ for $i=0,\dots,p-1$ where $$m_B(x)=\frac{1}{N}\sum_{b\in B}e^{2\pi ibx}\quad x\in\br.$$

\end{definition}

\begin{definition}
We denote by $L^*$ the set of all finite words with digits in $L$, including the empty word. 
For $l\in L$  let $S_l$ be given as in (\ref{e17}) where $m_l$ is replaced by the exponential $e_l$. If $w=l_1l_2\dots l_n\in L^*$ then by $S_w$ we denote the composition $S_{l_1}S_{l_2}\dots S_{l_n}$.
\end{definition}

\begin{theorem}\label{th3.4} Let $B\subset\br$, $0\in B$, $|B|=N$, $R>1$ and let $\mu_B$ be the invariant measure associated to the IFS $\tau_b(x)=R^{-1}(x+b)$, $b\in B$. Assume that the IFS has no overlap and that the set 
$\frac{1}{R}B$ has a spectrum $L\subset \br$, $0\in L$. Then the set 
$$\mathcal{E}(L)=\{S_we_{-c}:c \mbox{ is an extreme cycle point for }(B,L), w\in L^* \}$$
is an orthonormal basis in $L^2(\mu_B)$. Some of the vectors in $\mathcal E(L)$ are repeated but we count them only once.  
\end{theorem}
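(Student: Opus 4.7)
The plan is to invoke Theorem~\ref{diez} with Hilbert space $\H = L^2(\mu_B)$, parameter space $X = \R$, vector-valued function $f(t) = e_t$, orthonormal set $\mathcal{E} = \mathcal{E}(L)$ (with parallel vectors identified, matching the ``count once'' convention), and the Cuntz representation $(S_l)_{l \in L}$ obtained from the QMF basis $(e_l)_{l \in L}$ via Propositions~\ref{pr2.8} and~\ref{pr11}. Conditions (ii)--(iv) of Theorem~\ref{diez} are routine: norm continuity of $t \mapsto e_t$ and $\norm{e_t}_{L^2(\mu_B)} = 1$ are immediate, density of $\{e_t : t \in \R\}$ in $L^2(\mu_B)$ follows from Stone--Weierstrass on the compact attractor $X_B$, and for (iv) I take $c_0 = 0$, which is a trivial extreme cycle point (1-cycle with digit $0 \in L$ and $m_B(0)=1$), so $e_0 = 1 \in \mathcal{E}(L)$.

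The key computation for (iii) is
\[
S_l^* e_t(z) \;=\; \frac{1}{N} \sum_{b \in B} e^{-2\pi i l(z+b)/R}\, e^{2\pi i t(z+b)/R} \;=\; m_B\!\left(\tfrac{t-l}{R}\right)\, e_{(t-l)/R}(z),
\]
derived from the adjoint formula of Proposition~\ref{pr11} and $\tau_b(z)=(z+b)/R$; this identifies $\mathfrak{m}_l(t) = m_B((t-l)/R)$ and $g_l(t) = (t-l)/R$. For (i), elements $S_w e_{-c}$ with nonempty word $w = l_0 w'$ lie in $S_{l_0} \mathcal{E}(L)$ by definition. For the empty-word case $\xi = e_{-c_0}$, unfolding the cycle $c_0 \to c_1 \to \cdots \to c_{p-1} \to c_0$ with digits $l_0, \ldots, l_{p-1}$ gives $S_{l_0}^* e_{-c_0} = m_B(-c_1) e_{-c_1}$, and $\abs{m_B(-c_1)} = 1$ makes $S_{l_0}^*$ norm-preserving on $e_{-c_0}$. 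Consequently $e_{-c_0}$ lies in the range of the isometry $S_{l_0}$, so $e_{-c_0} = \overline{m_B(-c_1)}\, S_{l_0} e_{-c_1}$ is a unit-modulus multiple of $S_{l_0} e_{-c_1} \in S_{l_0} \mathcal{E}(L)$ (noting that $c_1$ is again an extreme cycle point, being a cyclic shift of the same cycle).

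The main work lies in (v): I must show that every $h \in \mathcal{C}(\R)$ with $h \geq 0$, $h(-c) = 1$ at every extreme cycle point $c$, and satisfying the Ruelle equation
\[
h(t) \;=\; \sum_{l \in L} \abs{m_B\!\left(\tfrac{t-l}{R}\right)}^2\, h\!\left(\tfrac{t-l}{R}\right)
\]
is identically $1$. Two ingredients make this tractable: the Parseval identity $\sum_{l \in L} \abs{m_B((t-l)/R)}^2 = 1$, which holds because $L$ is a spectrum for $R^{-1}B$ (so $\{e_l\}_{l \in L}$ is an orthonormal basis in $L^2(N^{-1}\sum_{b \in B} \delta_{b/R})$), expressing the Ruelle equation as a convex combination of the values $h(g_l(t))$; and the contractivity of the maps $g_l$ with ratio $1/R$, which confines the long-term dynamics to the compact attractor $K$ of the IFS $\{g_l\}_{l \in L}$.

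The strategy for (v) is a maximum-principle argument on $K$: the argmax of $h$ on $K$ is forward-invariant under the positive-weight branches of $\{g_l\}$, and by contractivity, positive-weight forward orbits accumulate on periodic cycles of $\{g_l\}$. The central dynamical assertion---that any such accumulation cycle in the argmax must satisfy $\abs{m_B} = 1$ at each of its points, hence correspond to an extreme cycle point (where $h = 1$) and force $\max_K h = 1$---is the main obstacle; it is handled via a Ruelle-transfer-operator / weighted-orbit analysis in the style of~\cite{DJ06}. A symmetric minimum argument yields $h \equiv 1$ on $K$; iterating the Ruelle equation $n$ times writes $h(t)$ for arbitrary $t \in \R$ as a convex combination of values $h(G_n(t,w))$ at points $G_n(t,w)$ that converge into a neighborhood of $K$, and continuity of $h$ extends $h \equiv 1$ from $K$ to all of $\R$.
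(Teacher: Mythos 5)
Your high-level plan coincides with the paper's: apply Theorem \ref{diez} to $\H=L^2(\mu_B)$, $f(t)=e_t$, and the Cuntz representation built from the QMF basis $(e_l)_{l\in L}$; your computation of $S_l^*e_t$ and your checks of (ii) and (iv) are fine. But there are genuine gaps. The first concerns orthonormality and condition (i). Theorem \ref{diez} takes as a hypothesis that $\mathcal E$ is an orthonormal \emph{set}, and for $\mathcal E(L)$ this is not automatic: one must show that $S_we_{-c}$ and $S_{w'}e_{-c'}$ are either orthogonal or literally \emph{equal}, and your phrase ``with parallel vectors identified'' does not accomplish this --- a set containing two parallel but unequal unit vectors is not orthonormal, and no identification is available if the vectors are honestly different. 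The missing ingredient is that $|m_B(c)|=1$ together with $0\in B$ forces, via the equality case of the triangle inequality, $e^{2\pi i bc}=1$ for all $b\in B$; hence $m_B(-c_1)=1$ exactly and $S_{l_0}e_{-c_1}=e_{-c_0}$ with \emph{no phase}. Your range-of-the-isometry argument only yields $e_{-c_0}=\lambda S_{l_0}e_{-c_1}$ with $|\lambda|=1$ (and the factor should be $m_B(-c_1)$, not its conjugate, since $v=S_{l_0}S_{l_0}^*v$), which is not enough for the set identity $\mathcal E=\bigcup_l S_l\mathcal E$ in (i). Even granting the exact relation, one still needs the argument the paper gives --- append cycle letters to $w$ and $w'$ until the words have equal length, then invoke the Cuntz relations --- to conclude ``orthogonal or equal''; you omit this entirely.

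The second gap is condition (v), which you yourself label ``the main obstacle'' and defer to an unproved ``central dynamical assertion'' in the style of \cite{DJ06}. That assertion (that accumulation cycles of the argmax of $h$ must satisfy $|m_B|=1$, hence be extreme cycles) is not what is needed and is not how the argument goes; the values of $h$ at extreme cycle points enter only at the very end, to pin down the constant. The workable route is: on a compact interval $I$ with $g_l(I)\subset I$ for all $l\in L$, set $m=\min_I h$ and $h'=h-m\ge0$; $h'$ satisfies the same Ruelle identity, so a zero of $h'$ propagates forward, because at each zero $z$ all terms $|m_B(g_l(z))|^2h'(g_l(z))$ must vanish while $\sum_l|m_B(g_l(z))|^2=1$ guarantees some branch has nonzero weight. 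Since $m_B$ has only finitely many zeros in $I$, from a far-enough iterate onward \emph{every} branch can be followed, so the zero set of $h'$ becomes dense in the attractor $X_L$ of $\{g_l\}_{l\in L}$ and $h'\equiv0$ there by continuity; the extreme cycle points lie in $X_L$ and $h=1$ at them, forcing $m=1$, hence $h\equiv1$ on $I$, and then one lets $I$ grow. Without this (or an equivalently completed argument) the proposal does not establish the theorem.
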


\begin{proof}Let $c$ be an extreme cycle point. Then $\abs{m_B(c)}=1$. Using the fact that we have equality in the triangle inequality $(1=\abs{m_B(c)}\leq \frac{1}{N}\sum_{b\in B}\abs{e^{2\pi ibc}}=1)$ 
, and since $0\in B$, we get that $e^{2\pi ibc}=1 $ so $bc\in \bz$ for all $b\in B$. Also there exists another extreme cycle point $d$ and $l\in L$ such that $\frac{d+l}{R}=c$. Then we have:
$S_le_{-c}(x)=e^{2\pi ilx}e^{2\pi i(Rx-b)(-c)}$, if $x\in\tau_b(X_B)$. Since $bc\in\bz$ and $R(-c)+l=-d$, we obtain
\beq\label{cex}
S_le_{-c}=e_{-d}
\eeq
We use this property to show that the vectors $S_we_{-c}$, $S_{w'}e_{-c'}$ are either equal or orthogonal for $w, w'$ in $L^*$ and $c, c'$ extreme cycle points for $(B,L)$. Using (\ref{cex}), we
can append some letters at the end of $w$ and $w'$ suh that the new words have the same length:
$$ S_we_{-c}=S_{w\alpha}e_{-d},\quad  S_{w'}e_{-c'}=S_{w'\beta}e_{-d'},\quad \abs{w\alpha}=\abs{w'\beta}\quad\mbox{where }d,d'\mbox{ are cycle points.}$$
Moreover, repeating the letters for the cycle points $d$ and $d'$ as many times as we want, we can assume that $\alpha$ ends in a repetition of the letters associated to $d$ and similarly 
for $\beta$ and $d'$. But, since $ \abs{w\alpha}=\abs{w'\beta}$, the Cuntz relations imply that $S_{w\alpha}e_{-d}\perp S_{w'\beta}e_{-d'}$ or $w\alpha=w'\beta$. Assume $\abs{w}\leq\abs{w'}$. Then 
$\alpha=w''\beta$ for some word $w''$. Then  $S_{w\alpha}e_{-d}\perp S_{w'\beta}e_{-d}$ iff  $S_{\alpha}e_{-d}\perp S_{w''\beta}e_{-d'}$. Also, $\alpha$ consists of repetitions of the digits of the cycle associated to 
$d$ and similarly for $d'$. So $S_{\alpha}e_{-d}=e_{-f}$, $S_{w''\beta}e_{-d'}=e_{-f'}$, and all points $d, d', f, f', c, c'$ all belong to the same cycle. So the only case when  $S_we_{-c}$ is not orthogonal to $S_{w'}e_{-c'}$ is when they are equal.

Next we check that the hypotheses of Theorem \ref{diez} are satisfied. We let $f(t)=e_{-t}\in L^2(\mu_B)$. To check (i) we just to have to see that $e_{-c}\in\cup_{l\in L}S_l\mathcal{E}(L)$. But this follows from (\ref{cex}). Requirement (ii) is clear. For (iii) we compute 
$$S_l^*e_{-t}(x)=\frac{1}{N}\sum_{b\in B}e^{-2\pi il\cdot\frac{1}{R}(x+b)}e^{-2\pi it\cdot\frac{1}{R}(x+b)}=e^{-2\pi x\cdot\frac{1}{R}(t+l)}\frac{1}{N}\sum_{b\in B}e^{-2\pi ib(\frac{t+l}{R})}= $$
$$=\cj{m_B}\left(\frac{t+l}{R}\right)e_{-\frac{t+l}{R}}(x)$$
So (iii) is satisfied with $\mathfrak{m}_l(t)=\cj{m_B}(\frac{t+l}{R})$, $g_l(t)=\frac{t+l}{R}$.

For (iv) take $c_0=-c$ for any extreme cycle point ( $0$ is always one). 
For (v), take $h$ continuous on $\br$ , $0\leq h\leq 1$, $h(c)=1$ for all $c$ with $e_{-c}\in \Span \mathcal{E}(L)$, and 
$$h(t)=\sum_{l\in L}\left|m_B\left(\frac{t+l}{R} \right)\right|^2h\left( \frac{t+L}{R}\right):=Rh(t)$$
In particular, we have $h(c)=1$ for every extreme cycle point $c$. Assume $h\not\equiv 1$. First we will restrict our attention to $t\in I:=[a,b]$ with $a\leq \frac{\mbox{min}L}{R-1}$, 
$b\geq \frac{\mbox{max}L}{R-1}$, and note that $g_l(I)\subset I$ for all $l\in L$. Let $m=\mbox{min}_{t\in I}h(t)$. Then let $h'=h-m$, assume $m<1$. Then $Rh'(t)=h'(t)$ for all 
$t\in\br$, $h'$ has a zero in $I$ and $h\geq 0$ on $I$, $h'(z_0)=0$. But this implies that $\abs{m_B(g_l(z_0))}^2h'(g_l(z_0))=0$ for all $l\in L$. Since $\sum_{l\in L}\abs{m_B(g_l(z_0))}^2=1$, it follows 
that for one of the $l_0\in L$ we have $h'(g_{l_0}(z_0))=0$. By induction, we can find $z_n=g_{l_{n-1}}\cdots g_{l_0}z_0$ such that $h'(z_n)=0$. We prove that $z_0$ is a cycle point. Suppose not. Since $m_B$ has finitely many zeros, for $n$ large enough $g_{\alpha_k}\cdots g_{\alpha_1}z_n$ is not a zero for $m_B$, for any choice of digits $\alpha_1,\dots,\alpha_k$ in $L$. But then, by using the same argument as above 
we get that $h'(g_{\alpha_k}\cdots g_{\alpha_1}z_n)=0$ for any $\alpha_1,\dots,\alpha_k\in L$. The points $\{g_{\alpha_k}\cdots g_{\alpha_1}z_n: \alpha_1,...\alpha_k\in L,k\in \N\}$ are dense 
in the attractor $X_L$ of the IFS $\{g_l\}_{l\in L}$, thus $h'$ is constant $0$ on $X_L$. But the extreme cycle points $c$ are in $X_L$ and since $h(c)=1$ we have $0=h'(c)=1-m$, so $m=1$. Thus $h=1$ on $I$. Since we can let $a\rightarrow-\infty$ and $b\rightarrow\infty$ we obtain that $h\equiv 1$.

\end{proof}

\begin{remark}\label{rpex}The functions in $\mathcal{E}(L)$ are piecewise exponential. The formula for $S_{l_1...l_n}e_{-c}$ is 
\beq\label{pex}
S_{l_1...l_n}e_{-c}(x)=e^{\alpha(b,l,c)}\cdot e_{l_1+Rl_2+...+R^{n-1}l_{n-1}+R^n(-c) }(x)
\eeq
where $\alpha(b,l,c)=-[b_1l_2+(Rb_1+b_2)l_3+...+(R^{n-2}b_1+...+b_{n-1})l_n]+(R^{n-1}b_1+...+b_n)\cdot c$ if $x\in\tau_{b_1}...\tau_{b_n}X_B$ . We have
$$S_{l_1}...S_{l_n}e_{-c}(x)=e_{l_1}(x)e_{l_2}(rx)...e_{l_n}(r^{n-1}x)e_c(r^nx)$$
If $x\in\tau_{b_1}...\tau_{b_n}X_B$ then $rx\in\tau_{b_2}...\tau_{b_n}X_B$, $r^{n-1}x\in\tau_{b_n}X_B$. So 
\begin{align*}
rx&=Rx-b_1\\
r^2x&=Rrx-b_2=R^2x-Rb_1- b_2\\
&\vdots\\
r^{n-1}x&=R^{n-1}x-R^{n-2}b_1-...-Rb_{n-2}-b_{n-1}\\
r^nx&=R^nx-R^{n-1}b_1-R^{n-2}b_2-...-Rb_{n-1}-b_n.
\end{align*}
The rest follows from a direct computation.
 
\end{remark}

\begin{corollary}In the hypothesis of Theorem \ref{diez}, if in addition $B, L\subset \Z$ and $R\in \bz$, then there exists a set $\Lambda$ such that $\{ e_{\lambda} : \lambda\in\Lambda\}$ is an 
orthonormal basis for $L^2(\mu_B)$.

\end{corollary}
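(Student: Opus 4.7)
The plan is to combine Theorem \ref{th3.4} with the explicit formula of Remark \ref{rpex}. By Theorem \ref{th3.4}, the family $\mathcal{E}(L) = \{S_w e_{-c} : w \in L^*,\ c \text{ an extreme cycle point for }(B,L)\}$ is already an orthonormal basis for $L^2(\mu_B)$, so it suffices to show that, under the hypothesis $B,L\subset\bz$ and $R\in\bz$, each vector $S_w e_{-c}$ coincides $\mu_B$-a.e.\ with a single exponential $e_\lambda$. Taking $\Lambda$ to be the collection of frequencies that arise (with duplicates suppressed, as already done in Theorem \ref{th3.4}) will then produce the desired basis.

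From Remark \ref{rpex}, on the cylinder $\tau_{b_1}\cdots\tau_{b_n}(X_B)$ one has
$$S_{l_1\ldots l_n} e_{-c}(x) = e^{2\pi i \alpha(b,l,c)}\, e_{\lambda(l,c)}(x),\qquad \lambda(l,c) := l_1 + R l_2 + \cdots + R^{n-1} l_n - R^n c,$$
where the phase $\alpha(b,l,c)$ is the real-valued expression displayed in Remark \ref{rpex}. It splits into two kinds of contributions: a sum of terms of the form $\pm R^i b_j l_k$ and a sum of the form $R^{n-k}b_k\,c$. Terms of the first kind are integers because $R,\,b_j,\,l_k\in\bz$. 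For the second kind I invoke the extreme cycle condition: $|m_B(c)|=1$ combined with $0\in B$ and equality in the triangle inequality (exactly the step at the start of the proof of Theorem \ref{th3.4}) forces $e^{2\pi i b c}=1$, so $bc\in\bz$ for every $b\in B$. Since $R\in\bz$, multiplying by $R^{n-k}$ keeps us in $\bz$, and hence $\alpha(b,l,c)\in\bz$ for every tuple $(b_1,\ldots,b_n)\in B^n$.

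Consequently $e^{2\pi i \alpha(b,l,c)}=1$ uniformly across cylinders, so $S_w e_{-c} = e_{\lambda(l,c)}$ as elements of $L^2(\mu_B)$. Setting
$$\Lambda := \bigl\{ \lambda(l,c) : n\geq 0,\ (l_1,\ldots,l_n)\in L^n,\ c\text{ an extreme cycle point for }(B,L)\bigr\},$$
the family $\{e_\lambda : \lambda\in\Lambda\}$ is an orthonormal basis for $L^2(\mu_B)$. The only real obstacle is the bookkeeping of the phase formula in Remark \ref{rpex}; all the substantive work has already been done in Theorem \ref{th3.4}, and the integrality hypothesis on $B$, $L$, and $R$ enters only to collapse the piecewise exponentials of $\mathcal{E}(L)$ into single global exponentials on $X_B$.
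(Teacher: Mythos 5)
Your proposal is correct and follows essentially the same route as the paper: invoke Theorem \ref{th3.4} for the orthonormality and completeness, then use the phase formula of Remark \ref{rpex} together with $B,L,R\subset\Z$ and the fact that $bc\in\Z$ for extreme cycle points (established at the start of the proof of Theorem \ref{th3.4}) to conclude that each $S_we_{-c}$ collapses to a single global exponential. You simply spell out the integrality of $\alpha(b,l,c)$ in more detail than the paper does.
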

\begin{proof} If everything is an integer then, it follows from Remark \ref{rpex} that $S_we_{-c}$ is an exponential function for all $w$ and extreme cycle points $c$. Note that, as in the proof of Theorem \ref{diez}, 
$bc\in\Z$ for all $b\in B$.

\end{proof}

\begin{example}\label{ex3.7} We consider the IFS that generates the middle third Cantor set: $R=3$, $B=\{0,2\}$. The set $\frac{1}{3}\{0,2\}$ has spectrum $L=\{0,3/4\}$. We look for the extreme cycle points for 
$(B,L)$. \\We need $\abs{m_B(-c)}=1$ so $\abs{\frac{1+e^{2\pi i2c}}{2}}=1$, therefore $c\in\frac12\Z$. Also $c$ has to be a cycle for the IFS $g_0(x)=x/3$, $g_{3/4}(x)=\frac{x+3/4}{3}$ so $0\leq c\leq \frac{3/4}{3-1}=3/8$. Thus, the only extreme cycle is $\{0\}$. By Theorem \ref{diez} $\mathcal{E}=\{S_w1: w\in\{0,3/4\}^*\}$ is an orthonormal basis for $L^2(\mu_B)$. Note also that the numbers $e^{ 2\pi i\alpha(b,l,c)}$ in formula (\ref{pex}) are $\pm 1$ because $2\pi i B\cdot L\subset \pi i\Z$.
\end{example}

\subsection{Walsh bases} In the following, we will focus on the unit interval, which can be regarded as the attractor of a simple IFS and we use step functions for the QMF basis to generate Walsh-type bases for $L^2[0,1]$.

\begin{example}The interval $[0,1]$ is the attractor of the IFS $\tau_0x=\frac{x}{2}$, $\tau_1x=\frac{x+1}{2}$, and the invariant measure is the Lebesgue measure on $[0,1]$. The map $r$ defined in Example \ref{ex2} is $rx=2x\mbox{mod}1$. Let $m_0=1$, $m_1=\chi_{[0,1/2)}-\chi_{[1/2,1)}$. It is easy to see that $\{m_0, m_1\}$ is a QMF basis. Therefore $S_0$, $S_1$ defined as in Proposition \ref{pr11}
form a representation of the Cuntz algebra $\mathcal{O}_2$. 
\end{example}

\begin{proposition}
The set $\mathcal{E}:=\{S_w1: w\in\{0,1\}^*\}$ is an orthonormal basis for $L^2[0,1]$, the Walsh basis.
\end{proposition}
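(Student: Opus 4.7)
The plan is to invoke Theorem \ref{diez} with $X = \R$ and the norm-continuous map $f : \R \to L^2[0,1]$ given by $f(t) = e_{-t}$, where $e_{-t}(x) = e^{-2\pi i t x}$. Then $\|f(t)\| = 1$ and the range already contains the standard Fourier basis $\{e_{-n} : n \in \Z\}$ of $L^2[0,1]$, so condition (ii) is immediate. For condition (iv) we take $c_0 = 0$, since $f(0) = 1 = S_{\emptyset} 1 \in \mathcal{E}$.

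For condition (iii), apply formula (\ref{e19}). For $z \in [0,1)$ the $r$-preimages are $z/2 \in [0,1/2)$ and $(z+1)/2 \in [1/2,1)$, on which $m_1$ takes values $+1$ and $-1$ respectively, so a short calculation yields
\begin{equation*}
S_0^{*} e_{-t} = \frac{1+e^{-\pi i t}}{2}\, e_{-t/2}, \qquad S_1^{*} e_{-t} = \frac{1-e^{-\pi i t}}{2}\, e_{-t/2}.
\end{equation*}
Thus $\mathfrak m_0(t) = (1+e^{-\pi i t})/2$, $\mathfrak m_1(t) = (1-e^{-\pi i t})/2$, and $g_0(t) = g_1(t) = t/2$; moreover $|\mathfrak m_0(t)|^2 + |\mathfrak m_1(t)|^2 \equiv 1$.

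Condition (i) splits into the covering identity and orthonormality. The covering $\mathcal{E} = S_0\mathcal{E} \cup S_1\mathcal{E}$ is immediate: $S_w 1 \in S_{w_1}\mathcal{E}$ for nonempty $w$, and $1 = S_0\cdot 1 \in S_0\mathcal{E}$. For orthonormality, first observe that $S_0 1 = 1$ means trailing zeros of a word $w$ can be deleted without changing $S_w 1$, so as a set $\mathcal{E}$ consists of $1$ together with $\{S_w 1 : w \text{ ends in } 1\}$. Given two distinct such reduced words $w, w'$ with $|w| \le |w'|$, write $\langle S_w 1, S_{w'} 1\rangle = \langle 1, S_w^{*} S_{w'} 1\rangle$; the Cuntz relations annihilate this unless $w$ is a prefix of $w'$, say $w' = w w''$ with $w''$ nonempty ending in $1$, in which case the inner product becomes $\int_0^1 S_{w''} 1\, dx$. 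Unfolding $S_{w''}^{*} 1$ letter by letter using (\ref{e19}) gives $S_0^{*} 1 = 1$ but $S_1^{*} 1 = 0$, so the presence of any $1$ in $w''$ forces $S_{w''}^{*} 1 = 0$ and the inner product vanishes.

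Condition (v) is the heart of the argument but is unusually easy here. Since $g_0 = g_1 = t/2$ and $|\mathfrak m_0|^2 + |\mathfrak m_1|^2 \equiv 1$, equation (\ref{ceq2}) collapses to $h(t) = h(t/2)$. Iterating gives $h(t) = h(t/2^n)$ for every $n \ge 0$, and continuity at $0$ forces $h(t) = h(0)$ for all $t \in \R$. Since $f(0) = 1 \in \mathcal{E}$, the point $0$ lies in the set on which $h$ is required to equal $1$, so $h \equiv 1$. All five hypotheses are verified, and Theorem \ref{diez} yields that $\mathcal{E}$ is an orthonormal basis. The only bookkeeping obstacle is the prefix/truncation argument in (i); the payoff for using the step-function QMF $(1,\chi_{[0,1/2)} - \chi_{[1/2,1)})$ is that the transfer equation degenerates enough that no cycle analysis (as was needed in Theorem \ref{th3.4}) is required.
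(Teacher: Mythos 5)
Your proof is correct and follows essentially the same route as the paper: apply Theorem \ref{diez} with $f(t)=e_{\mp t}$, compute $S_0^*$ and $S_1^*$ on exponentials to get $\mathfrak m_0,\mathfrak m_1$ with $|\mathfrak m_0|^2+|\mathfrak m_1|^2\equiv 1$ and $g_0=g_1=t/2$, so that the transfer equation collapses to $h(t)=h(t/2)$ and continuity at $0$ forces $h\equiv 1$. In fact you are slightly more careful than the paper in one respect: Theorem \ref{diez} assumes $\mathcal{E}$ is an orthonormal \emph{set}, and your prefix argument via $S_0^*1=1$, $S_1^*1=0$ supplies that verification, which the paper leaves implicit. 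The one piece of the statement you do not address is the identification of $\mathcal{E}$ with \emph{the} classical Walsh system; the paper closes by writing $S_w1(x)=m_1(r^{i_1}x)\cdots m_1(r^{i_k}x)$ and recognizing the factors $m_1(r^ix)$ as Rademacher functions, and you would need a sentence to that effect to justify the words ``the Walsh basis'' in the claim.
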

\begin{proof}
We check the conditions in Theorem \ref{diez}. To see that (i) holds note that $S_01=1$. Define $f(t)=e_t$, $t\in\br$. (ii) is clear. For (iii) we compute 
$$ S_1^*e_t(x)=\frac{1}{2}(e^{2\pi it\cdot x/2}+e^{2\pi it\cdot (x+1)/2})=e^{2\pi it\cdot x/2}\frac{1}{2}(1+e^{2\pi it/2})$$
$$ S_1^*e_t(x)=\frac{1}{2}(e^{2\pi it\cdot x/2}-e^{2\pi it\cdot (x+1)/2})=e^{2\pi it\cdot x/2}\frac{1}{2}(1-e^{2\pi it/2})$$
Thus (iii) holds with $\mathfrak{m}_0(t)=\frac{1}{2}(1+e^{2\pi it/2})$,  $\mathfrak{m}_1(t)=\frac{1}{2}(1-e^{2\pi it/2})$, $g_0(t)=g_1(t)=\frac{t}{2}$. 
Since $e_0=1$ it follows that (iv) holds.

For (v) take $h$ continuous on $\br$, $0\leq h\leq 1$, $h(c)=1$ for all $c\in\br$ with $e_t\in\Span\mathcal{E}$, in particular $h(0)=1$ and 
$$h(t)= \left| \frac{1}{2}(1+e^{2\pi it/2}) \right|^2h(t/2)+\left| \frac{1}{2}(1-e^{2\pi it/2}) \right|^2h(t/2)=h(t/2)$$
Then $h(t)=h(t/2^n)$ for all $t\in\br$, $n\in\N$. Letting $n\rightarrow\infty$ and using the continuity of $h$, we get $h(t)=h(0)=1$ for all $t\in\br$. Since all conditions hold,
we get that $\mathcal{E}$ is an orthonormal basis. That $\mathcal{E}$ is actually the Walsh basis follows from the following calculations: for $\abs{w}=n$ in $\{0,1\}^*$ let $n=\sum_ix_i2^i$ be the base $2$ expansion
of $n$. Because $S_0f=f\circ r$, $S_1f=m_1f\circ r$ and $m_0\equiv 1$ we obtain the following decomposition:
$$S_w1(x)=m_1(r^{i_1}x)\cdot m_1(r^{i_2}x)\cdots m_1(r^{i_k}x),\quad\mbox{where } i_1,i_2,\dots ,i_k\mbox{ correspond to those }i\mbox{ with }x_i=1.$$
Also $m_1(r^ix)=m_1(2^ix\mbox{mod}i)$ are the Rademacher functions and thus we obtain the Walsh basis (see e.g. \cite{ScWaSi90}). 

\end{proof}

The Walsh bases can be easily generalized by replacing the matrix $$\frac{1}{\sqrt2}\begin{pmatrix}
	1&1\\
	1&-1
\end{pmatrix}$$
which appears in the definition of the filters $m_0,m_1$, with an arbitrary unitary matrix $A$ with constant first row and by changing the scale from 2 to $N$.

\begin{theorem}\label{th3.10} Let $N\in \N$, $N\geq 2$. Let $A=[a_{ij}]$ be an $N\times N$ unitary matrix whose first row is constant $\frac{1}{\sqrt{N}}$. Consider the IFS $\tau_jx=\frac{x+j}{N}$, 
$x\in\br$, $j=0,\dots,N-1$ with the attractor $[0,1]$ and invariant measure the Lebesgue measure on $[0,1]$. Define
$$m_i(x)=\sqrt{N}\sum_{j=0}^{N-1}a_{ij}\chi_{[j/N, (j+1)/N]}(x)$$
Then $\{m_i\}_{i=0}^{N-1}$ is a QMF basis. Consider the associated representation of the Cuntz algebra $\mathcal{O}_N$. Then the set 
$\mathcal{E}:=\{ S_w1: w\in \{0,...N-1\}^* \}$ is an orthonormal basis for $L^2[0,1]$. 

\end{theorem}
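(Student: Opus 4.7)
The plan is to first verify that $(m_i)_{i=0}^{N-1}$ is a QMF basis, and then apply Theorem \ref{diez} to the representation of $\mathcal O_N$ built from these filters, with $f(t)=e_t\in L^2[0,1]$ and $X=\br$.

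For the QMF condition, note that for $z\in[0,1)$ we have $r^{-1}(z)=\{\tau_j(z):j=0,\dots,N-1\}$ with $\tau_j(z)\in[j/N,(j+1)/N)$, hence $m_i(\tau_j(z))=\sqrt N\,a_{ij}$. Therefore
$$\frac{1}{N}\sum_{r(w)=z}m_i(w)\cj{m_{i'}}(w)=\sum_{j=0}^{N-1}a_{ij}\cj{a_{i'j}}=(AA^*)_{ii'}=\delta_{ii'},$$
so $(m_i)$ is a QMF basis and Proposition \ref{pr11} produces the Cuntz representation $S_i f=m_i\,f\circ r$.

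Now I check the hypotheses of Theorem \ref{diez}. Condition (ii) is clear since $\{e_t:t\in\br\}$ spans $L^2[0,1]$ and $\|e_t\|=1$. Since the first row of $A$ is constant $1/\sqrt N$, we get $m_0\equiv 1$, so $S_0\mathbf 1=\mathbf 1$, giving (i); and for (iv) take $c_0=0$, since $f(0)=\mathbf 1=S_0\mathbf 1\in\Span\mathcal E$. For (iii), a direct computation using \eqref{e19} gives, for any $z\in[0,1)$,
$$S_i^* e_t(z)=\frac{1}{N}\sum_{j=0}^{N-1}\sqrt N\,\cj{a_{ij}}\,e^{2\pi i t(z+j)/N}=\mathfrak m_i(t)\,e_{t/N}(z),\qquad \mathfrak m_i(t):=\frac{1}{\sqrt N}\sum_{j=0}^{N-1}\cj{a_{ij}}\,e^{2\pi i tj/N},$$
so (iii) holds with $g_i(t)=t/N$ for every $i$.

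The main point is condition (v). Given $h\in\mathcal C(\br)$, $h\geq 0$, $h(c)=1$ whenever $e_c\in\Span\mathcal E$ (in particular $h(0)=1$), and satisfying $h(t)=\sum_i|\mathfrak m_i(t)|^2 h(t/N)$, I compute
$$\sum_{i=0}^{N-1}|\mathfrak m_i(t)|^2=\frac{1}{N}\sum_{j,j'}e^{2\pi i t(j-j')/N}\sum_{i}\cj{a_{ij}}a_{ij'}=\frac{1}{N}\sum_{j,j'}\delta_{jj'}\,e^{2\pi i t(j-j')/N}=1,$$
using $A^*A=I$. Hence $h(t)=h(t/N)$, so $h(t)=h(t/N^n)$ for every $n\geq 0$, and continuity at $0$ forces $h\equiv h(0)=1$. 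By Theorem \ref{diez}, $\mathcal E$ is an orthonormal basis for $L^2[0,1]$. There is no serious obstacle; the only nontrivial ingredients are the two identities $AA^*=I$ (for the QMF property) and $A^*A=I$ (for the collapse of the Ruelle equation to $h(t)=h(t/N)$), both of which are just the unitarity of $A$ read off from its rows and columns.
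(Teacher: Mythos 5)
Your proposal is correct and follows essentially the same route as the paper: take $f(t)=e_t$, verify hypotheses (i)--(v) of Theorem \ref{diez} with $\mathfrak m_k(t)=\frac{1}{\sqrt N}\sum_j\cj{a_{kj}}e^{2\pi itj/N}$ and $g_k(t)=t/N$, and collapse the transfer-operator equation to $h(t)=h(t/N)$ via unitarity of $A$ (the paper phrases this as $\frac1N\norm{Av}^2=1$, you as $A^*A=I$; same fact). Your explicit verification of the QMF property via $AA^*=I$ is a small addition the paper's proof leaves implicit.
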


\begin{proof}We check the conditions in Theorem \ref{diez}. Let $f(t)=e_t$, $t\in\br$.

To check (i) note that $S_01\equiv 1$. (ii) is clear. For (iii) we compute:
$$S_k^*e_t=\frac{1}{N}\sum_{j=0}^{N-1}\cj{m_k}(\tau_jx)e_t(\tau_jx)=\frac{1}{\sqrt{N}}\sum_{j=0}^{N-1}\cj{a_{kj}}e^{2\pi it\cdot (x+j)/N}=
e^{2\pi it\cdot x/N}\frac{1}{\sqrt{N}}\sum_{j=0}^{N-1}\cj{a_{kj}}e^{2\pi it\cdot j/N} $$
So (iii) is true with $\mathfrak{m}_k(t)=\frac{1}{\sqrt{N}}\sum_{j=0}^{N-1}\cj{a_{kj}}e^{2\pi it\cdot j/N}$ and $g_k(t)=\frac{t}{N}$.

(iv) is true with $c_0=0$. For (v) take $h\in \mathcal{C}(\br)$, $0\leq h\leq 1$, $h(c)=1$ for all $c\in\br$ with $e_c\in\Span\mathcal{E}$ ( in particular $h(0)=1$), and 
$$ h(t)=\sum_{k=0}^{N-1}\abs{\mathfrak{m}_k(t) }^2h(t/N)=h(t/N)\sum_{k=0}^{N-1}\frac{1}{N}\abs{ \sum_{j=0}^{N-1}a_{kj}e^{ -2\pi it\cdot j/N}  }^2 = h(t/N)\cdot \frac{1}{N}\norm{Av}^2$$
where $v=(e^{ -2\pi it\cdot j/N})_{j=0}^{N-1}$. Since $A$ is unitary, $\norm{Av}^2=\norm{v}^2=N$. Then $h(t)=h(t/N^n)$. Letting $n\rightarrow \infty$ and using the continuity 
of $h$ we obtain that $h(t)=1$ for all $t\in \br$. 
Thus, Theorem \ref{diez} implies that $\mathcal{E}$ is an orthonormal basis.

\end{proof}

\begin{remark} We can read the constants that appear in the step function $S_w1$ from the tensor of $A$ with itself $n$ times, where $n$ is the length of the word $w$.

Let $A$ be an $N\times N$ matrix, $B$ an $M\times M$ matrix. Then $A\otimes B$ has entries :
$$  (A\otimes  B)_{i_1+Mi_2, j_1+Mj_2} = a_{i_1j_1}b_{i_2j_2},\quad i_1, j_1=0,\dots, N-1\mbox{, }i_2,j_2=0,\dots, M-1$$
$$A\otimes B =
 \begin{pmatrix}
  Ab_{0,0} & Ab_{0,1} & \cdots & Ab_{0,M-1} \\
  Ab_{1,0} & Ab_{1,1} & \cdots & Ab_{1,M-1} \\
  \vdots  & \vdots  & \ddots & \vdots  \\
  Ab_{M-1,0} & Ab_{M-1,1} & \cdots & Ab_{M-1,M-1}
 \end{pmatrix}$$
The matrix $A^{\otimes n}$ is obtained by induction, tensoring to the left: $A^{\otimes n}=A\otimes A^{\otimes (n-1)}$.\\
Thus $A\otimes A\otimes A\otimes \dots\otimes A$, $n$ times, has entries
$$ A^{\otimes n}_{i_0+Ni_1+N^2i_2+\dots +N^{n-1}i_{n-1}, j_0+Nj_1+\dots +N^{n-1}j_{n-1} }=a_{i_0j_0}a_{i_1j_1}\dots a_{i_{n-1}j_{n-1}}$$
Now compute for $i_0,\dotsc i_{n-1}\in\{0,\dotsc,N-1\}$:
$$S_{i_0\dots i_{n-1}}1(x)=m_{i_0}(x)m_{i_1}(rx)\dots m_{i_{n-1}}(r^{n-1}x)$$
Suppose $x\in[\frac{k}{N^n}, \frac{k+1}{N^n})$, $0\leq k<N^n$ and $k=N^{n-1}j_0+N^{n-2}j_1+\dots +Nj_{n-2}+j_{n-1}$, where $0\leq j_0,\dots, j_{n-1}<N$. \\
Then $x\in [\frac{j_0}{N}, \frac{j_0+1}{N})$, $rx=(Nx)\mbox{mod}1\in[\frac{j_1}{N},\frac{j_1+1}{N}),\dots, r^{n-1}x=(N^{n-1}x)\mbox{mod}1\in [\frac{j_{n-1}}{N},\frac{j_{n-1}+1}{N})$, so
$m_{i_0}(x)=\sqrt{N}a_{i_0j_0}$, $m_{i_1}(rx)=\sqrt{N}a_{i_1j_1}$, $\dots$,$ m_{i_{n-1}}(r^{n-1}x)=\sqrt{N}a_{i_{n-1}j_{n-1}}$ hence
$$ S_{i_0\dots i_{n-1}}1(x)=\sqrt{N^n}a_{i_0j_0}\dots a_{i_{n-1}j_{n-1}}= \sqrt{N^n} A^{\otimes n}_{i_0+Ni_1+N^2i_2+\dots +N^{n-1}i_{n-1}, j0+Nj_1+\dots +N^{n-1}j_{n-1}  }$$

\end{remark}

\begin{figure}[htb]
\label{F:graphs1}
  \begin{center}
    \includegraphics[width=1.2in]{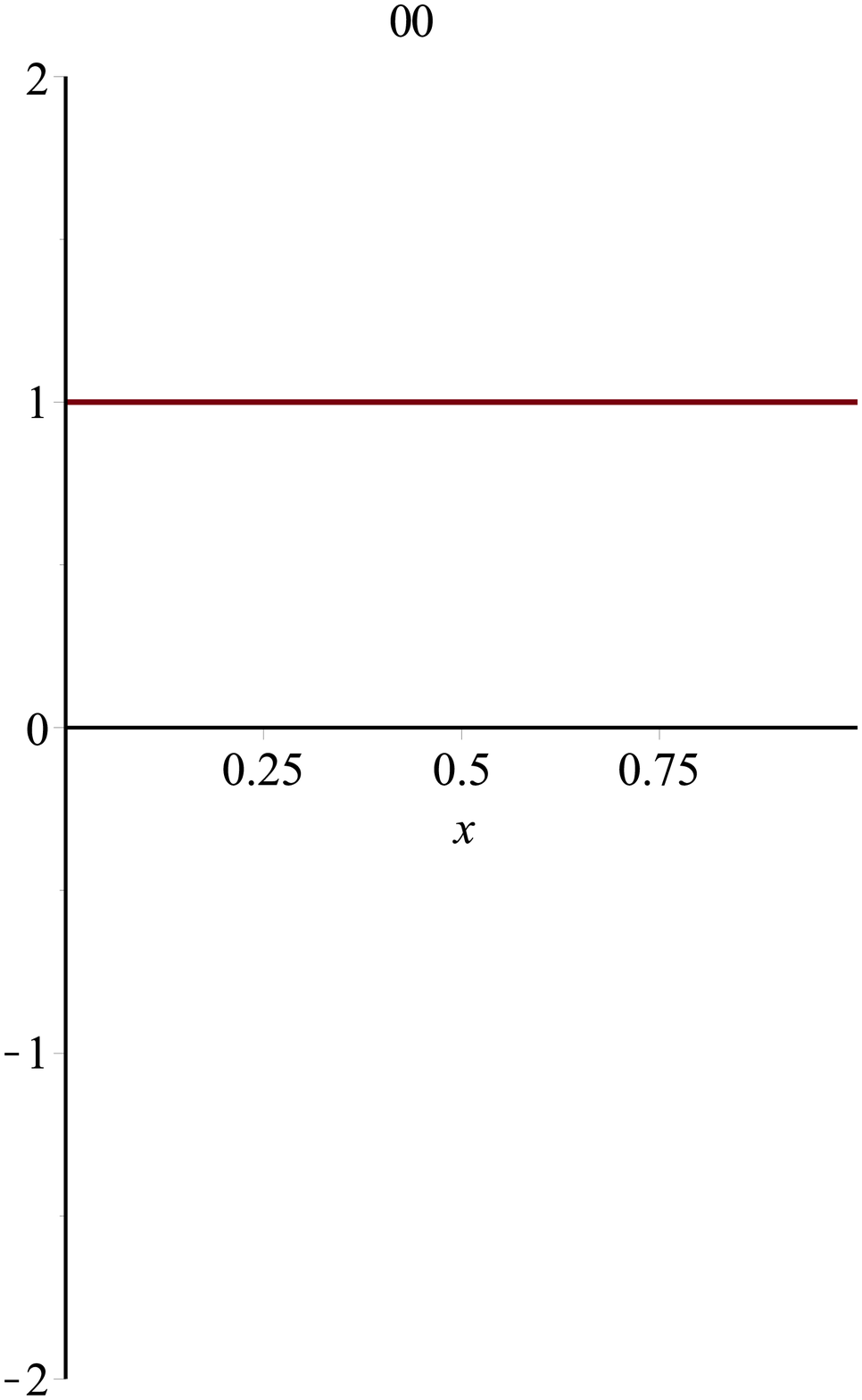}
    \includegraphics[width=1.2in]{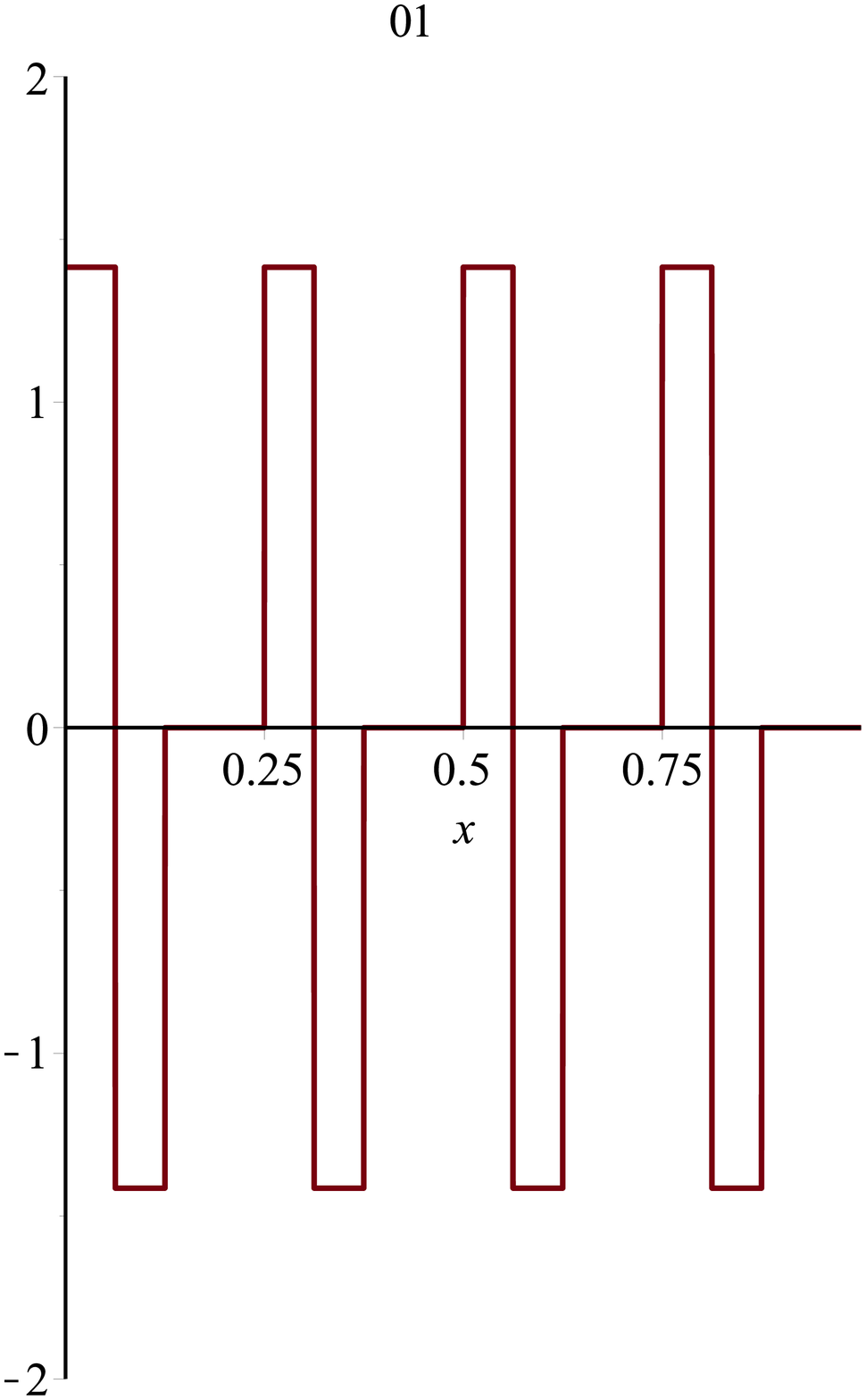}
    \includegraphics[width=1.2in]{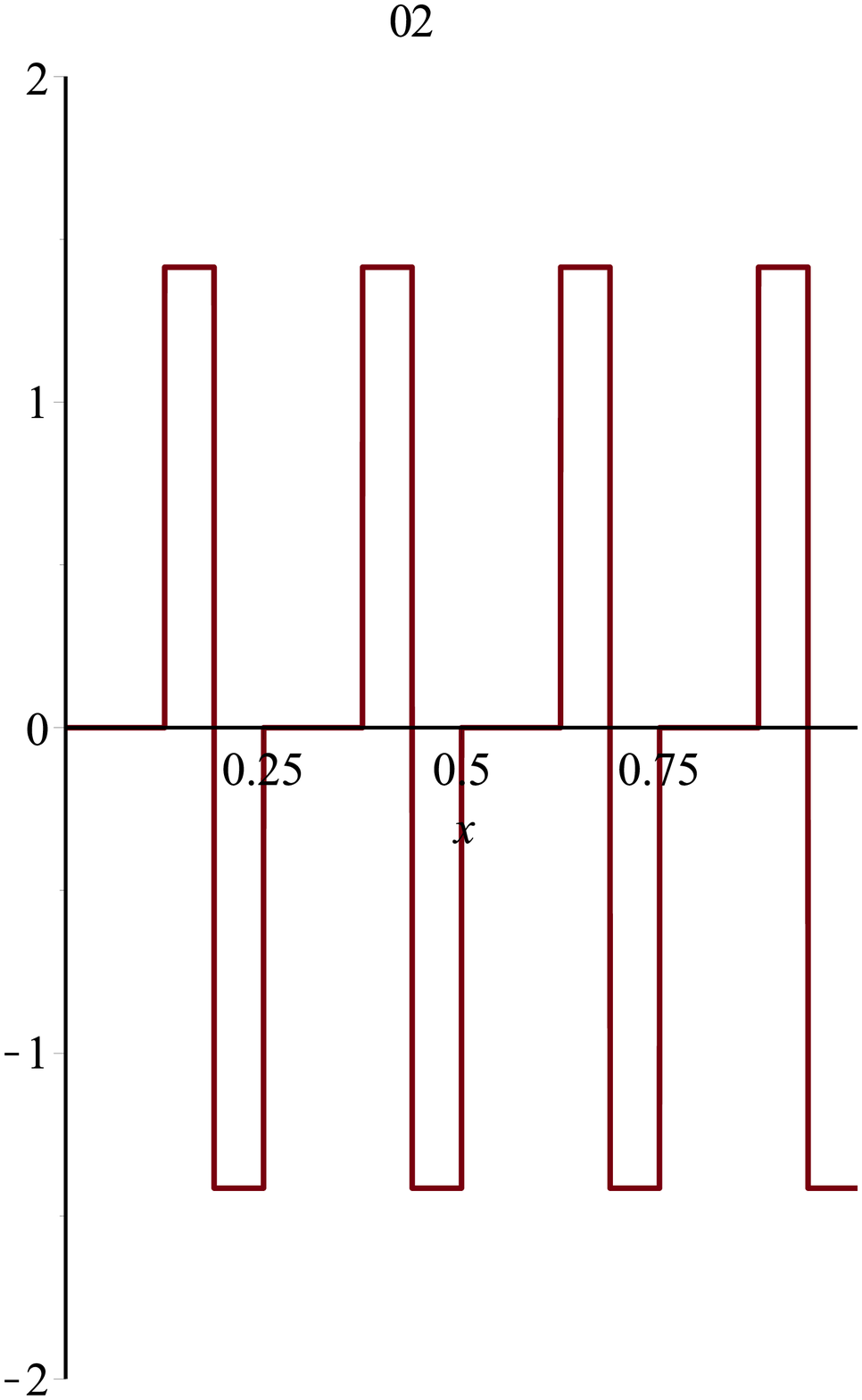}
    \includegraphics[width=1.2in]{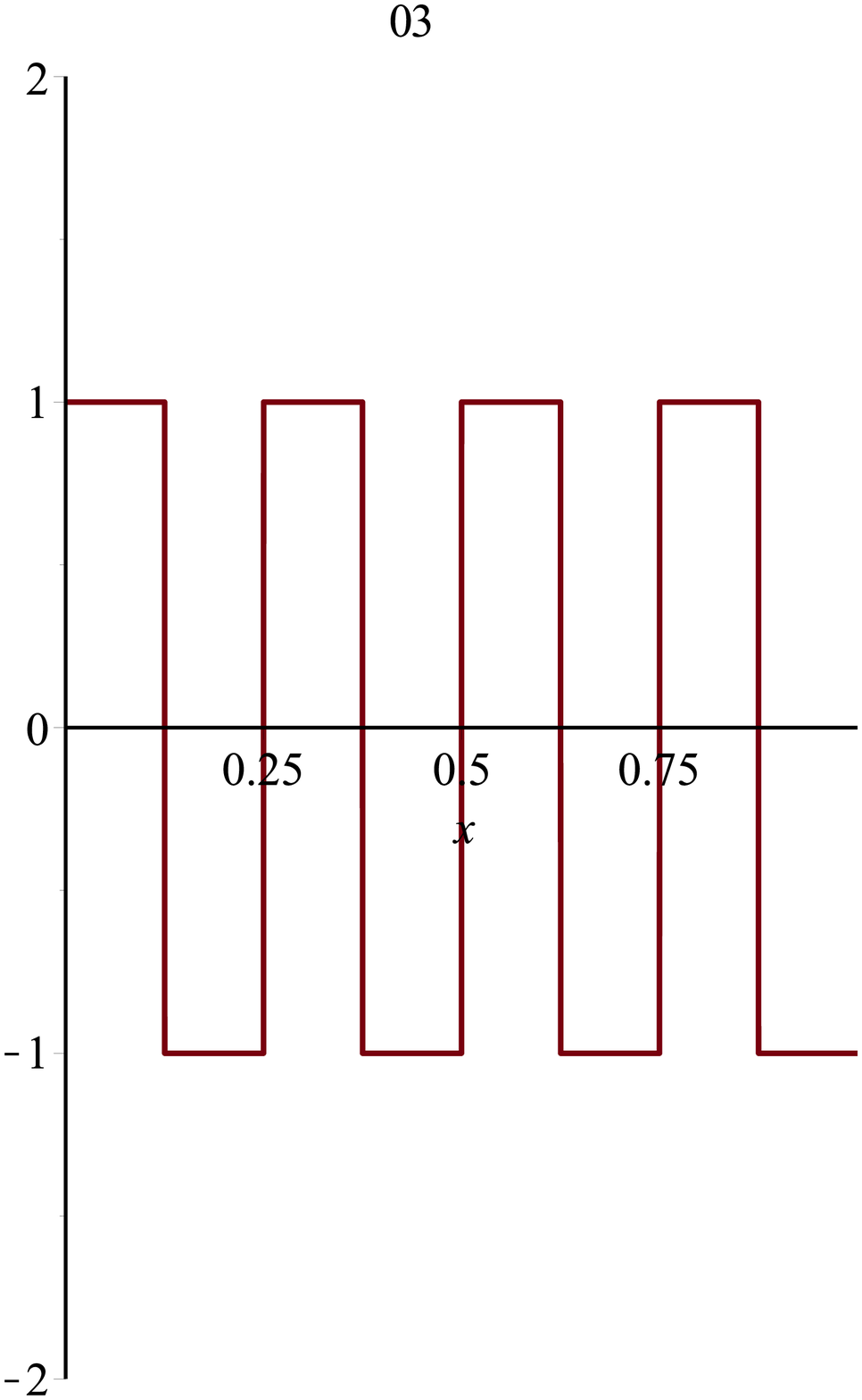}
    \newline
  \end{center}
\end{figure}
\begin{figure}[htb]
\label{F:graphs2}
  \begin{center}
    \includegraphics[width=1.2in]{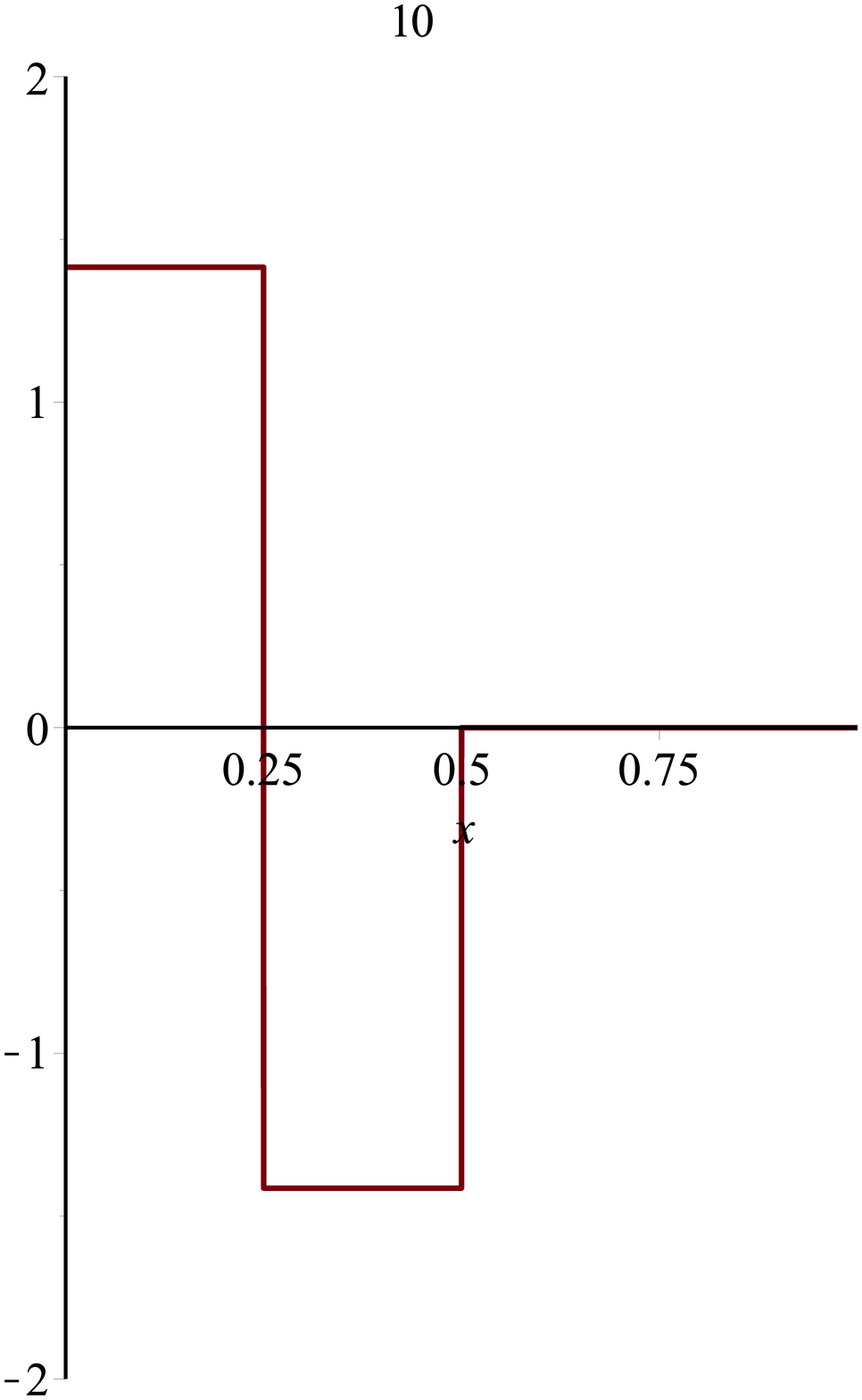}
    \includegraphics[width=1.2in]{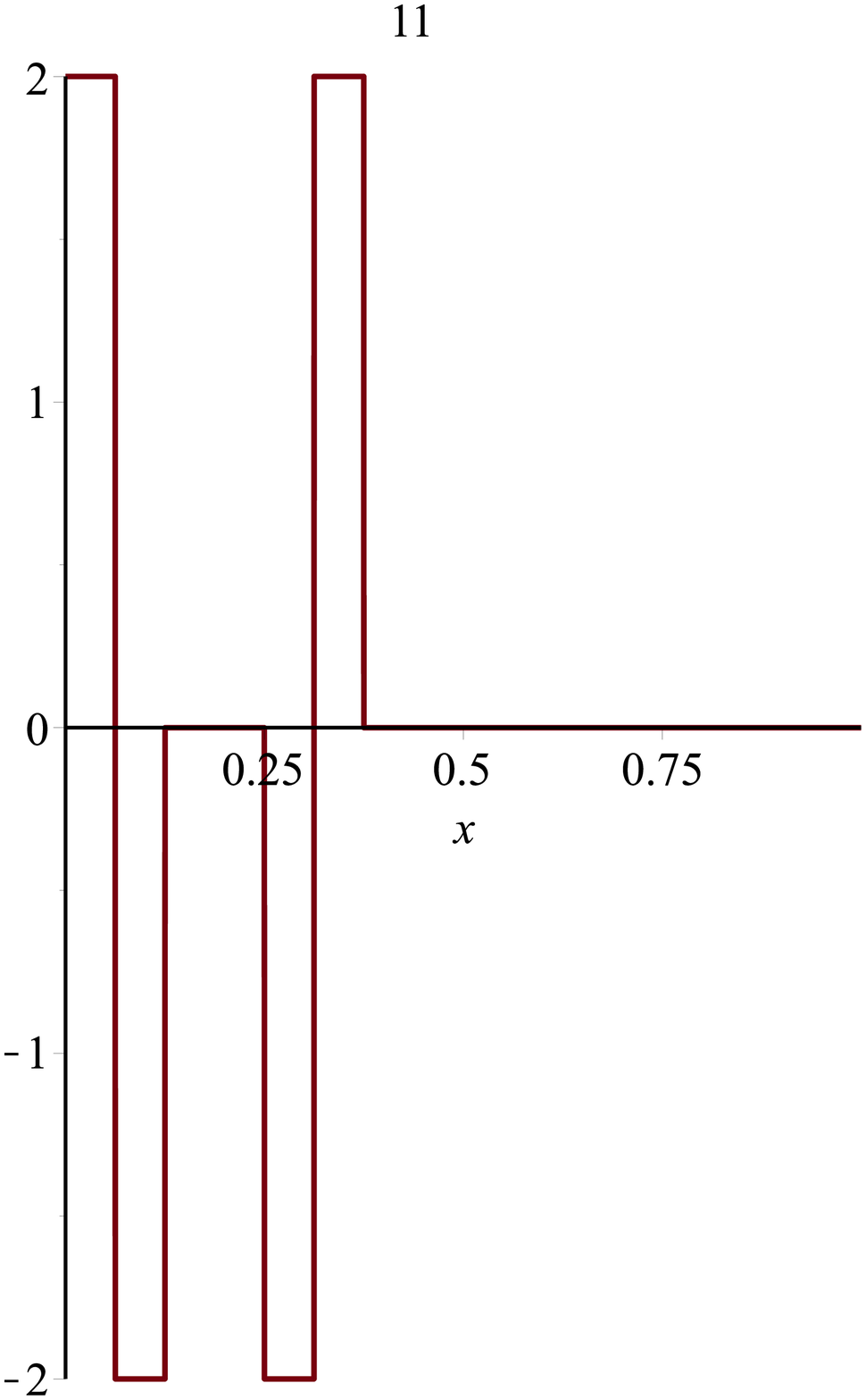}
    \includegraphics[width=1.2in]{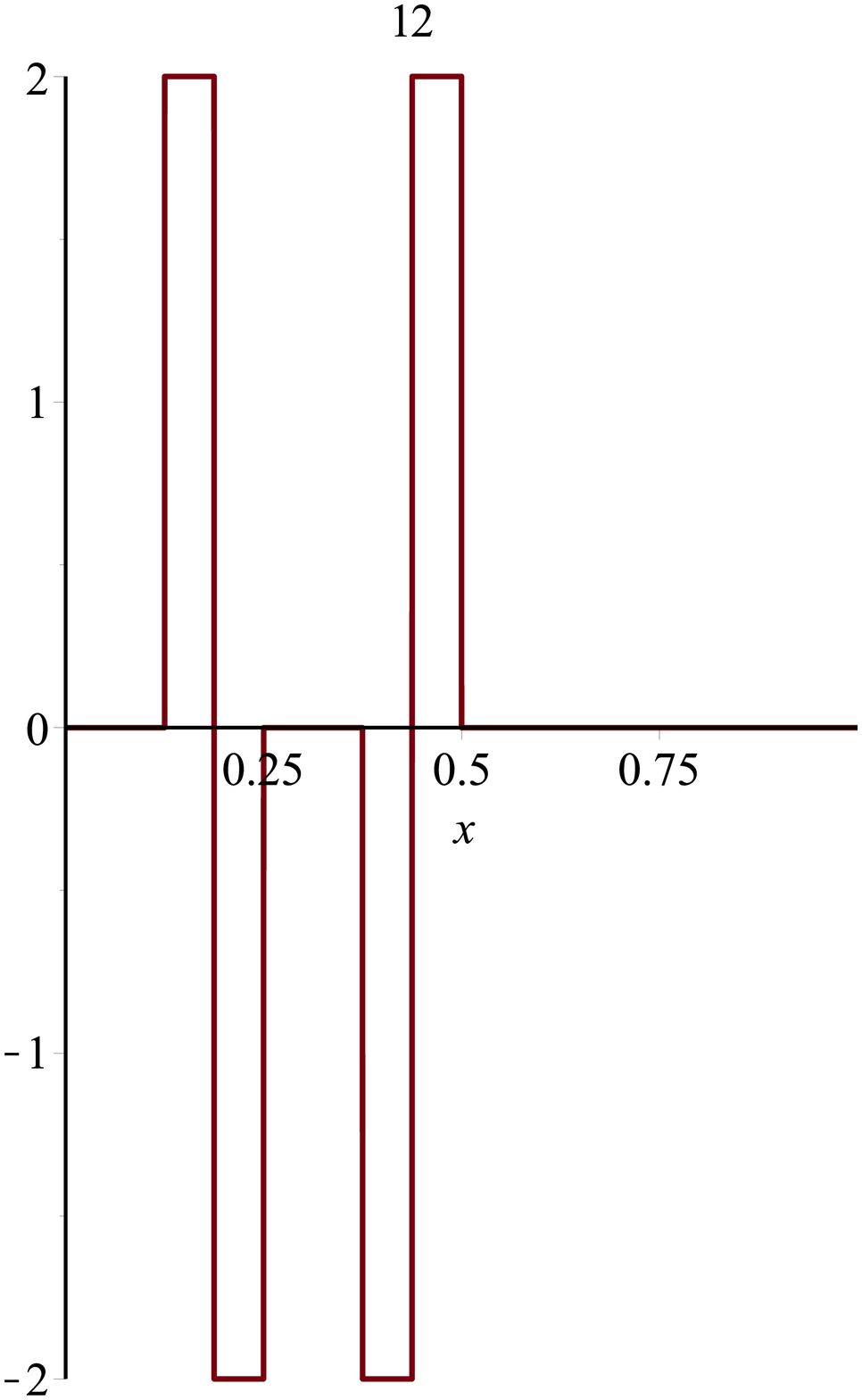}
    \includegraphics[width=1.2in]{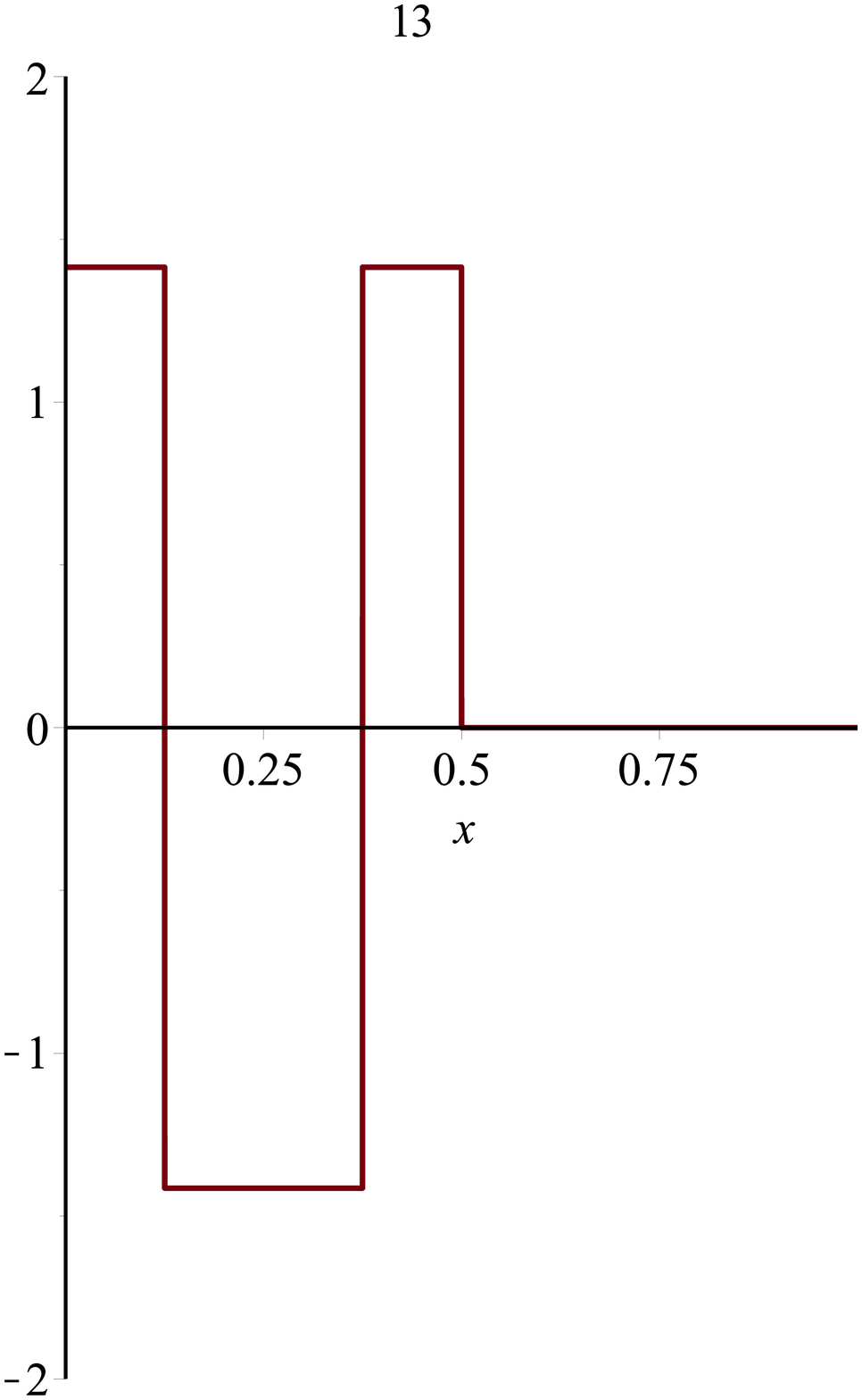}
    \newline
    \includegraphics[width=1.2in]{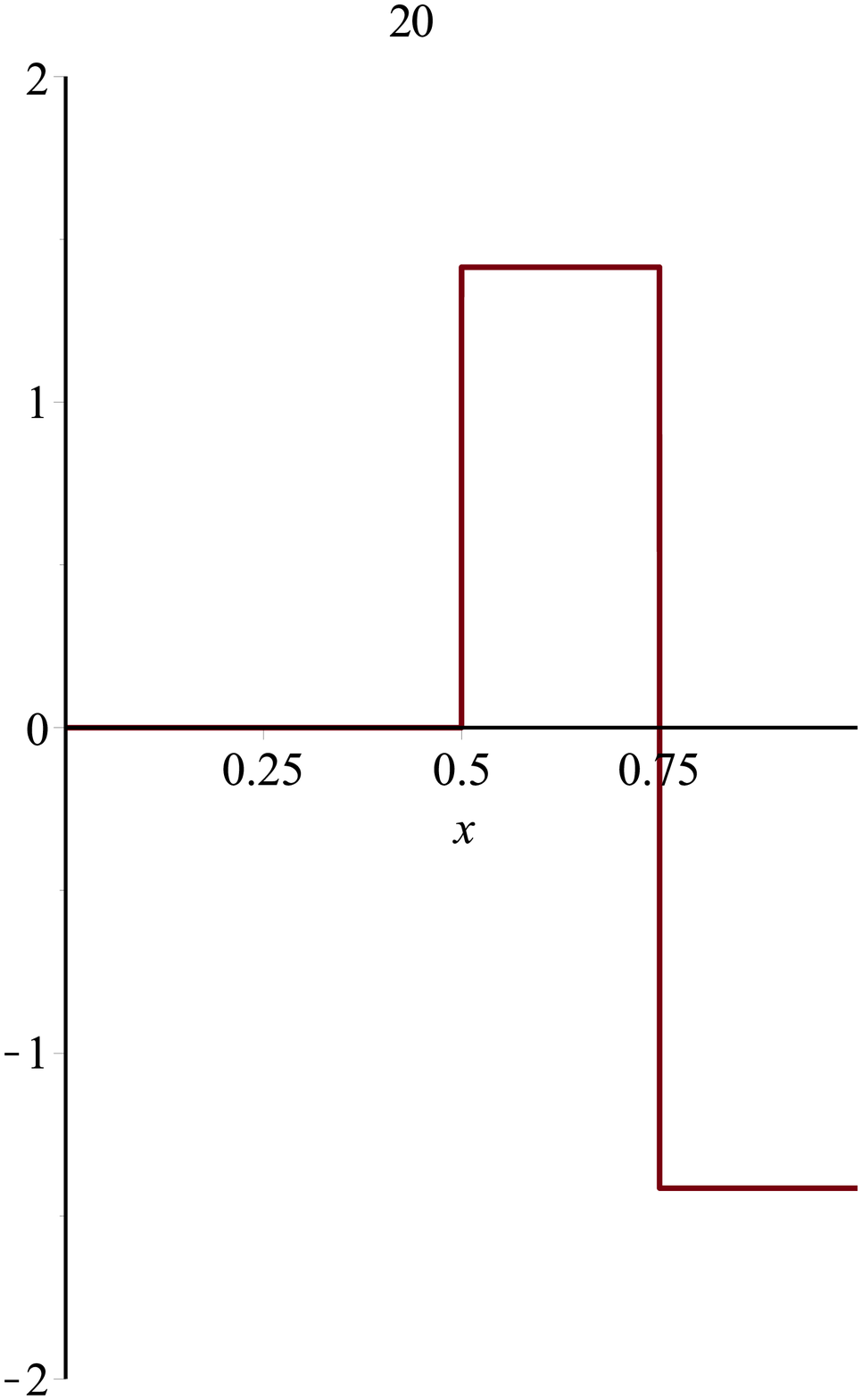}
    \includegraphics[width=1.2in]{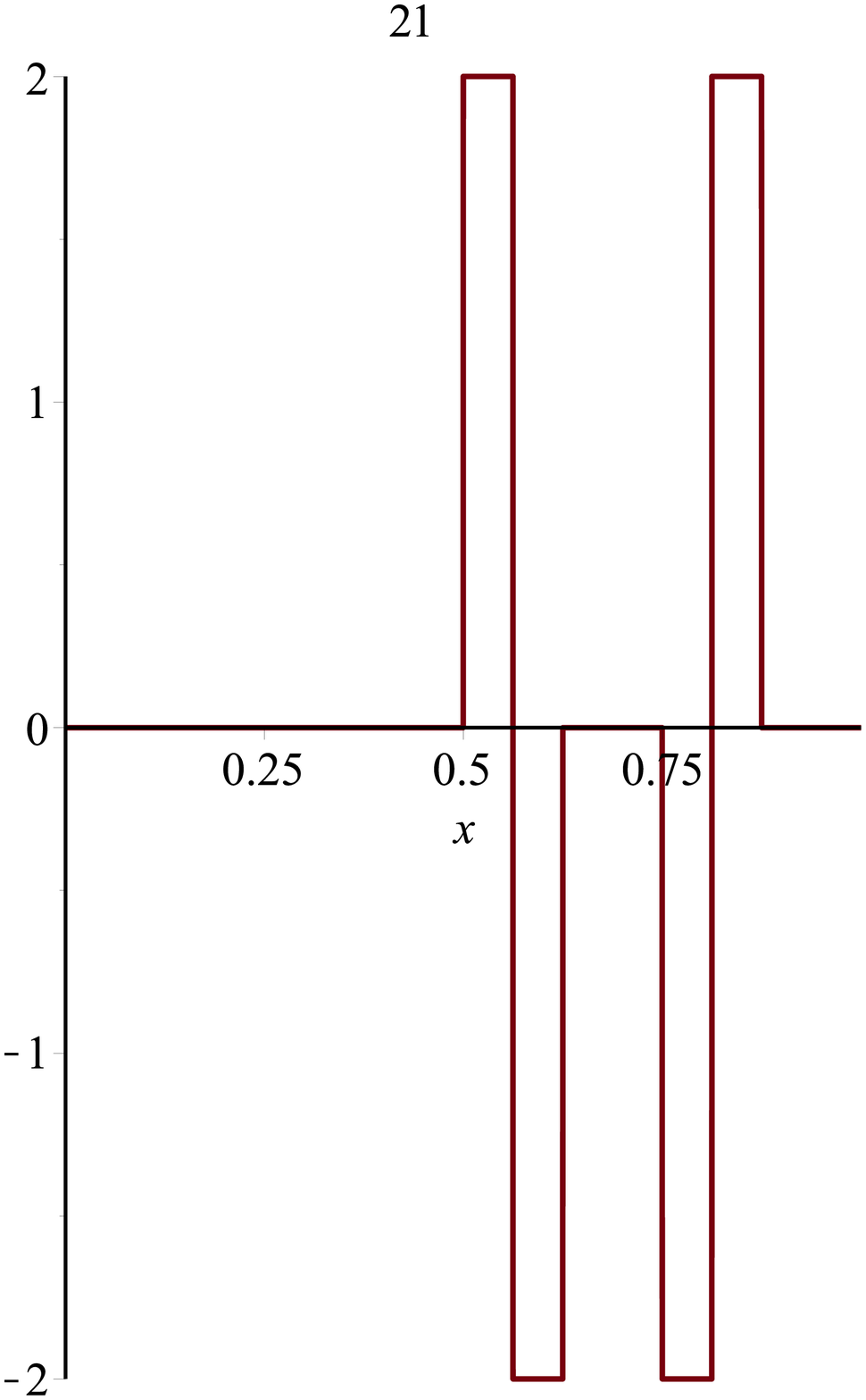}
    \includegraphics[width=1.2in]{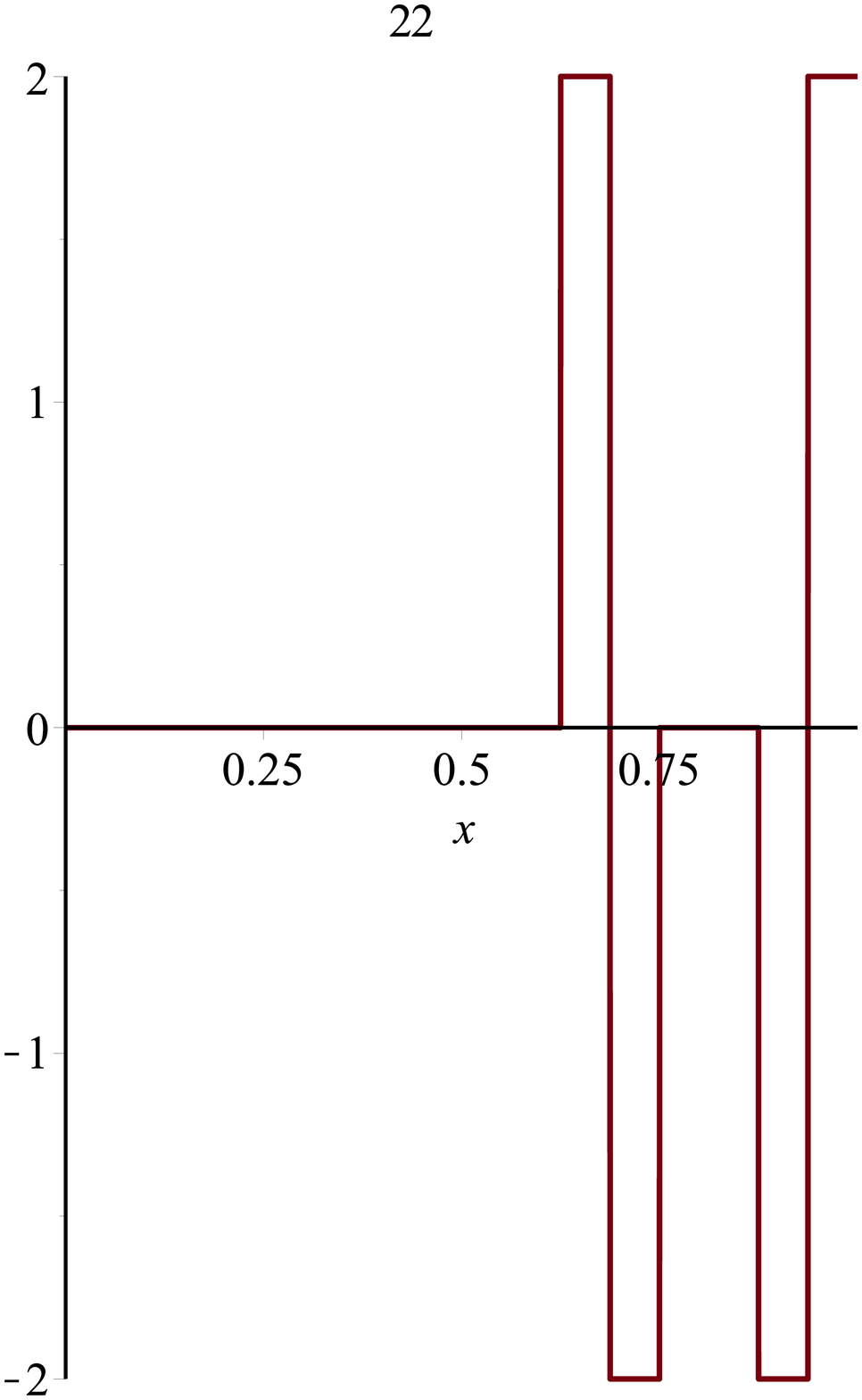}
    \includegraphics[width=1.2in]{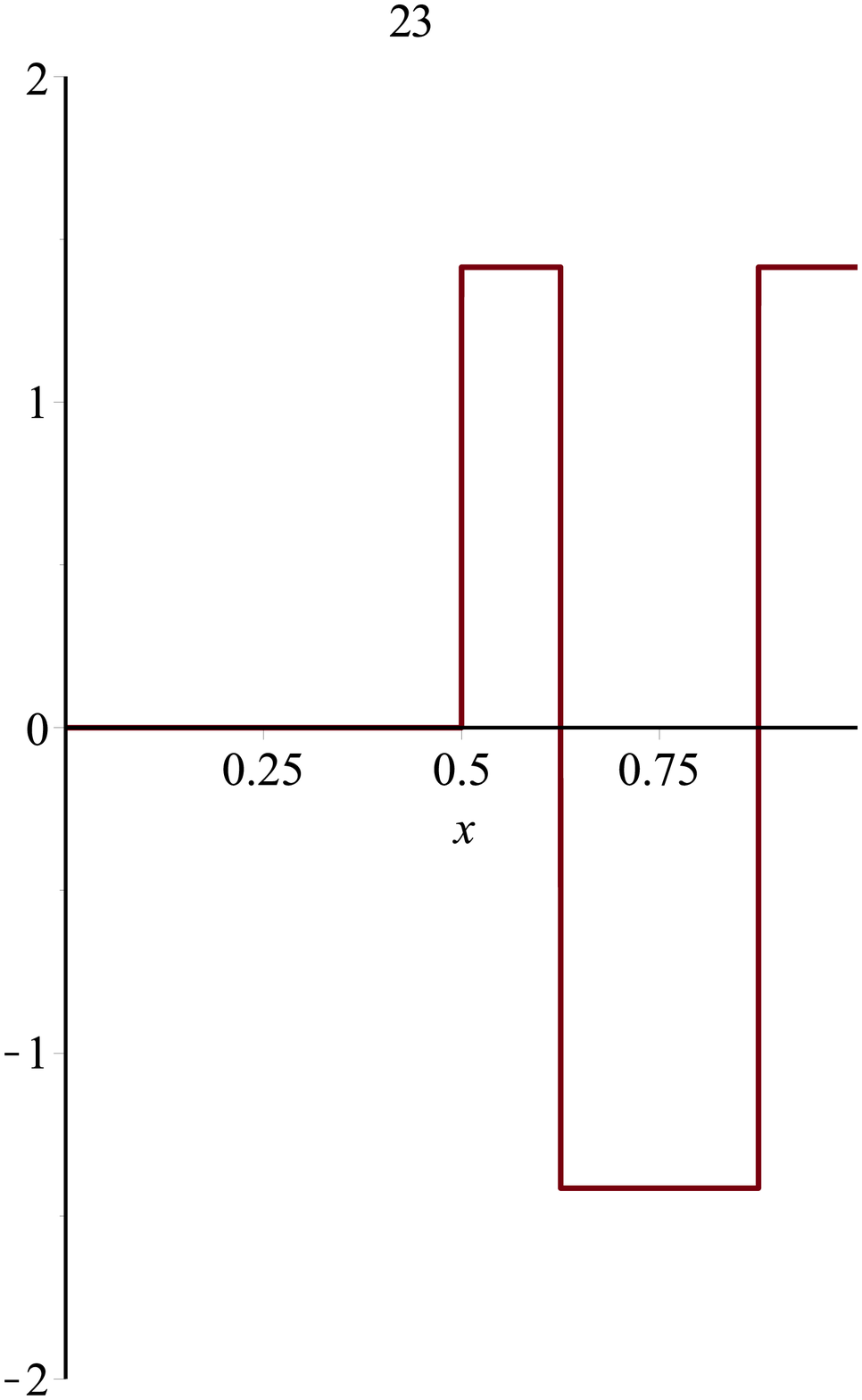}
    \newline
    \includegraphics[width=1.2in]{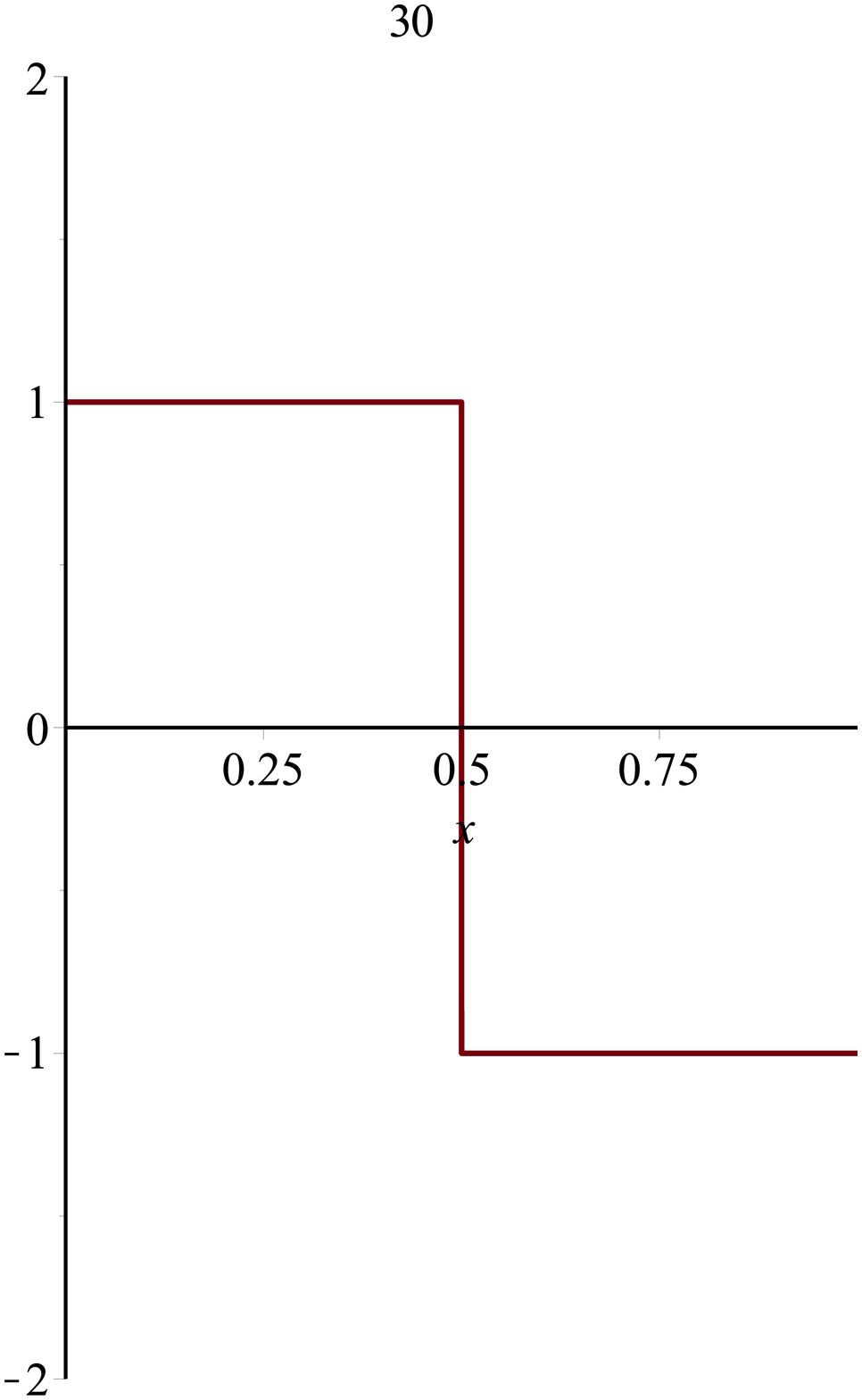}
    \includegraphics[width=1.2in]{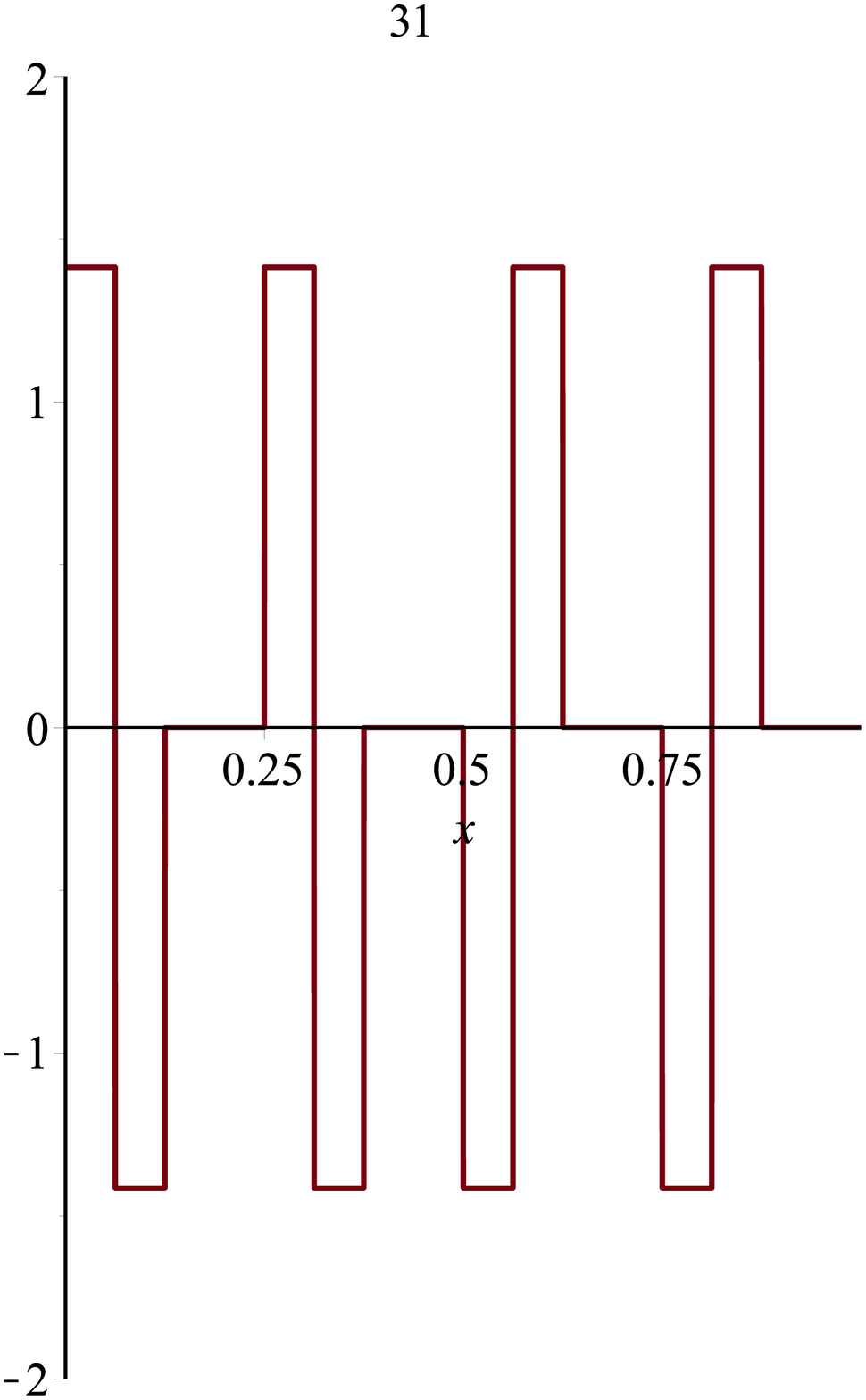}
    \includegraphics[width=1.2in]{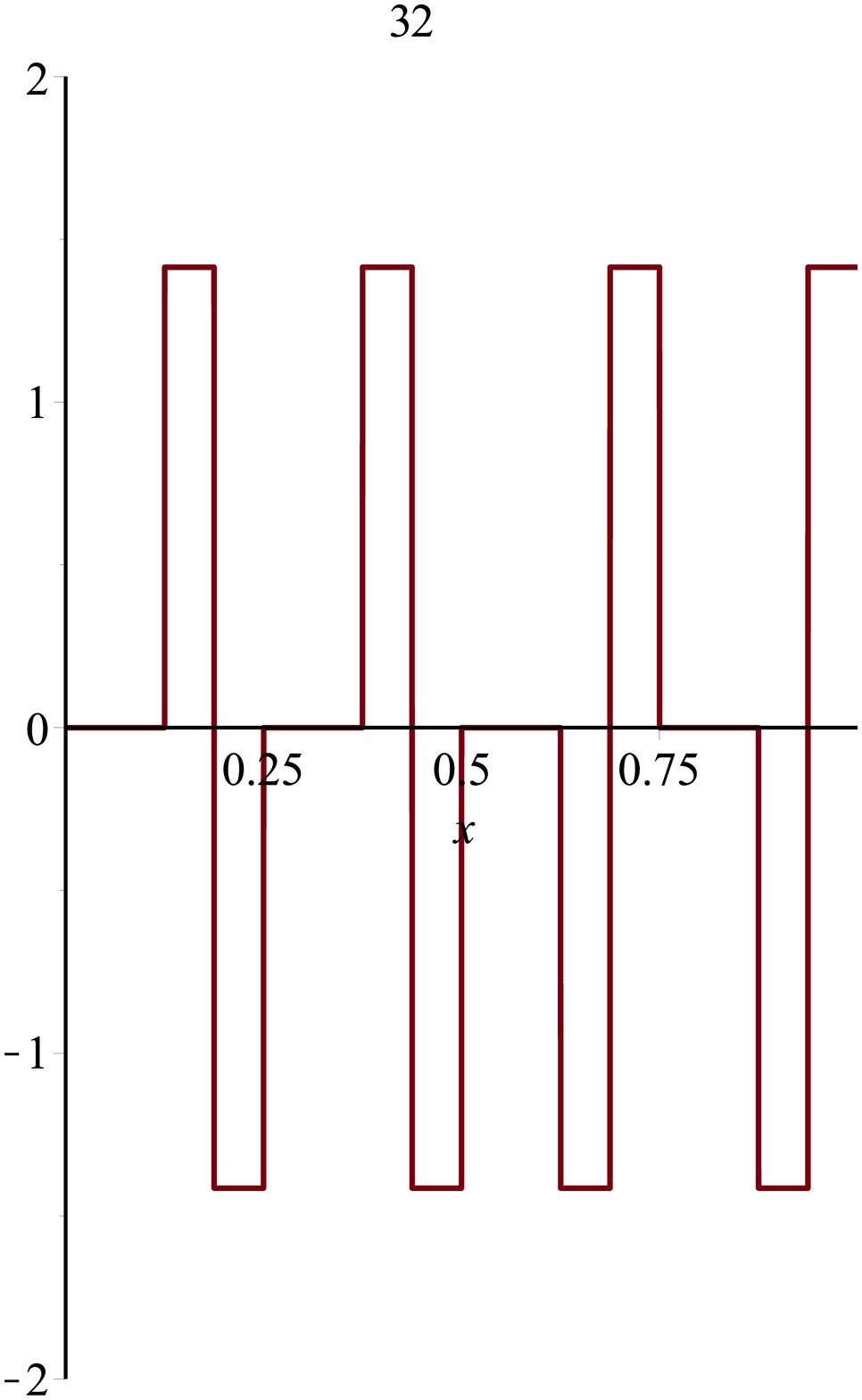}
    \includegraphics[width=1.2in]{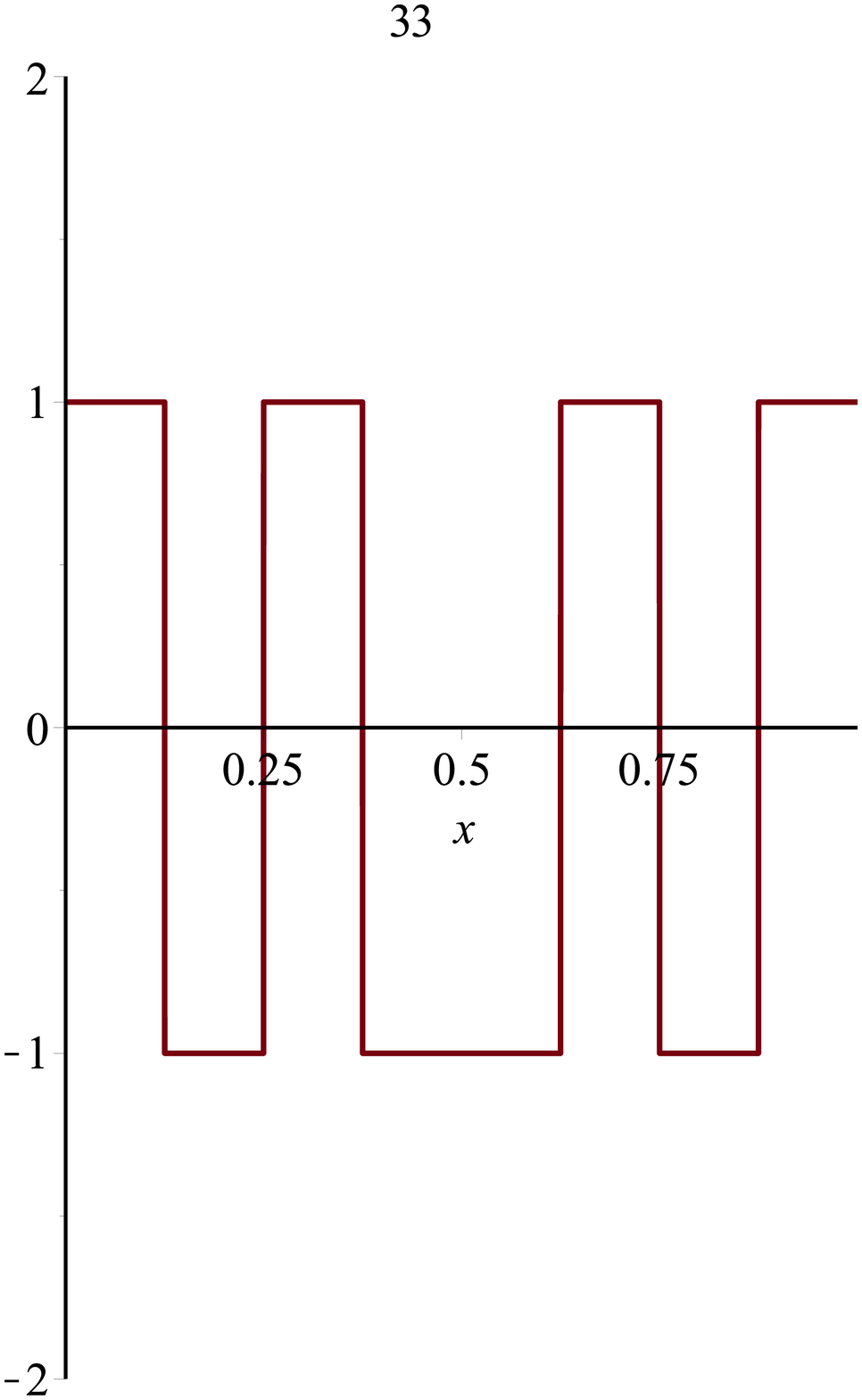}
    \newline
  \end{center}
  \caption{Walsh functions $S_w1$ for words $w$ of length 2.}
\end{figure}

\begin{example}
The pictures in Figure 1 show the Walsh functions that correspond to the scale $N=4$ and the matrix
$$A=\begin{pmatrix}
	\frac{1}{2}&\frac12&\frac12&\frac12\\
	\frac{\sqrt2}2&-\frac{\sqrt2}2&0&0\\
	0&0&\frac{\sqrt2}2&-\frac{\sqrt2}2\\
	\frac12&\frac12&-\frac12&-\frac12
\end{pmatrix}
$$
for the words of length 2, indicated at the top. 

\end{example}

\bibliographystyle{alpha}	
\bibliography{eframes}

\begin{thebibliography}{DMP08}

\bibitem[Bal00]{Bal00}
Viviane Baladi.
\newblock {\em Positive transfer operators and decay of correlations},
  volume~16 of {\em Advanced Series in Nonlinear Dynamics}.
\newblock World Scientific Publishing Co. Inc., River Edge, NJ, 2000.

\bibitem[BEJ00]{BrJo00}
Ola Bratteli, David~E. Evans, and Palle E.~T. Jorgensen.
\newblock Compactly supported wavelets and representations of the {C}untz
  relations.
\newblock {\em Appl. Comput. Harmon. Anal.}, 8(2):166--196, 2000.

\bibitem[BJ97]{BrJo97}
Ola Bratteli and Palle E.~T. Jorgensen.
\newblock Isometries, shifts, {C}untz algebras and multiresolution wavelet
  analysis of scale {$N$}.
\newblock {\em Integral Equations Operator Theory}, 28(4):382--443, 1997.

\bibitem[BJ02a]{BrJo02a}
Ola Bratteli and Palle Jorgensen.
\newblock {\em Wavelets through a looking glass}.
\newblock Applied and Numerical Harmonic Analysis. Birkh\"auser Boston Inc.,
  Boston, MA, 2002.
\newblock The world of the spectrum.

\bibitem[BJ02b]{BrJo02b}
Ola Bratteli and Palle E.~T. Jorgensen.
\newblock Wavelet filters and infinite-dimensional unitary groups.
\newblock In {\em Wavelet analysis and applications ({G}uangzhou, 1999)},
  volume~25 of {\em AMS/IP Stud. Adv. Math.}, pages 35--65. Amer. Math. Soc.,
  Providence, RI, 2002.

\bibitem[Bro65]{Bro65}
Hans Brolin.
\newblock Invariant sets under iteration of rational functions.
\newblock {\em Ark. Mat.}, 6:103--144 (1965), 1965.

\bibitem[Cun77]{Cun77}
Joachim Cuntz.
\newblock Simple {$C\sp*$}-algebras generated by isometries.
\newblock {\em Comm. Math. Phys.}, 57(2):173--185, 1977.

\bibitem[Dau92]{Dau92}
Ingrid Daubechies.
\newblock {\em Ten lectures on wavelets}, volume~61 of {\em CBMS-NSF Regional
  Conference Series in Applied Mathematics}.
\newblock Society for Industrial and Applied Mathematics (SIAM), Philadelphia,
  PA, 1992.

\bibitem[DJ05]{DuJo05}
Dorin~Ervin Dutkay and Palle E.~T. Jorgensen.
\newblock Hilbert spaces of martingales supporting certain
  substitution-dynamical systems.
\newblock {\em Conform. Geom. Dyn.}, 9:24--45 (electronic), 2005.

\bibitem[DJ06a]{DuJo06w}
Dorin~E. Dutkay and Palle E.~T. Jorgensen.
\newblock Wavelets on fractals.
\newblock {\em Rev. Mat. Iberoam.}, 22(1):131--180, 2006.

\bibitem[DJ06b]{DJ06}
Dorin~Ervin Dutkay and Palle E.~T. Jorgensen.
\newblock Iterated function systems, {R}uelle operators, and invariant
  projective measures.
\newblock {\em Math. Comp.}, 75(256):1931--1970 (electronic), 2006.

\bibitem[DJ07]{DuJo07}
Dorin~Ervin Dutkay and Palle E.~T. Jorgensen.
\newblock Martingales, endomorphisms, and covariant systems of operators in
  {H}ilbert space.
\newblock {\em J. Operator Theory}, 58(2):269--310, 2007.

\bibitem[DJ12]{DuJo12cu}
Dorin~Ervin Dutkay and Palle E.~T. Jorgensen.
\newblock Spectral measures and {C}untz algebras.
\newblock {\em Math. Comp.}, 81(280):2275--2301, 2012.

\bibitem[DMP08]{Dan08}
Jonas D'Andrea, Kathy~D. Merrill, and Judith Packer.
\newblock Fractal wavelets of {D}utkay-{J}orgensen type for the {S}ierpinski
  gasket space.
\newblock In {\em Frames and operator theory in analysis and signal
  processing}, volume 451 of {\em Contemp. Math.}, pages 69--88. Amer. Math.
  Soc., Providence, RI, 2008.

\bibitem[Hut81]{Hut81}
John~E. Hutchinson.
\newblock Fractals and self-similarity.
\newblock {\em Indiana Univ. Math. J.}, 30(5):713--747, 1981.

\bibitem[JP98]{JoPe98}
Palle E.~T. Jorgensen and Steen Pedersen.
\newblock Dense analytic subspaces in fractal {$L\sp 2$}-spaces.
\newblock {\em J. Anal. Math.}, 75:185--228, 1998.

\bibitem[MP11]{MaPa11}
Matilde Marcolli and Anna~Maria Paolucci.
\newblock Cuntz-{K}rieger algebras and wavelets on fractals.
\newblock {\em Complex Anal. Oper. Theory}, 5(1):41--81, 2011.

\bibitem[OP72]{ObPi72}
Marilyn~K. Oba and Tom~S. Pitcher.
\newblock A new characterization of the {$F$} set of a rational function.
\newblock {\em Trans. Amer. Math. Soc.}, 166:297--308, 1972.

\bibitem[SWS90]{ScWaSi90}
F.~Schipp, W.~R. Wade, and P.~Simon.
\newblock {\em Walsh series}.
\newblock Adam Hilger Ltd., Bristol, 1990.
\newblock An introduction to dyadic harmonic analysis, With the collaboration
  of J. P{\'a}l.

\end{thebibliography}
\end{document}